\documentclass[letterpaper,11pt]{amsart}

\usepackage{amssymb,amsbsy,amsmath,amsfonts,amssymb,amscd}
\usepackage{latexsym}
\usepackage{graphics}
\usepackage{color}
\usepackage{comment}
\input xy
\xyoption{all}

\theoremstyle{plain}
\newtheorem{thm}{Theorem}[section]
\newtheorem{theorem}[thm]{Theorem}

\newtheorem{lemma}[thm]{Lemma}

\newtheorem{proposition}[thm]{Proposition}
\theoremstyle{definition}
\newtheorem{remark}[thm]{Remark}

\newtheorem{defin}[thm]{Definition}

\newtheorem{example}[thm]{Example}

\newtheorem{question}[thm]{Question}

\numberwithin{equation}{section}

\newcommand{\sA}{{\mathcal A}}

\newcommand{\sE}{{\mathcal E}}
\newcommand{\sF}{{\mathcal F}}

\newcommand{\sL}{{\mathcal L}}

\newcommand{\sT}{{\mathcal T}}


\newcommand{\CC}{\ensuremath{\mathbb{C}}}
\newcommand{\RR}{\ensuremath{\mathbb{R}}}
\newcommand{\ZZ}{\ensuremath{\mathbb{Z}}}
\newcommand{\QQ}{\ensuremath{\mathbb{Q}}}

\newcommand{\NN}{\ensuremath{\mathbb{N}}}
\newcommand{\hol}{\ensuremath{\mathcal{O}}}
\newenvironment{dedication}
        {\begin{quotation}\begin{center}\begin{em}}
        {\par\end{em}\end{center}\end{quotation}}


\newcommand\la{\lambda}

\newcommand\al{\alpha}
\newcommand\be{\beta}
\newcommand\Ga{\Gamma}

\newcommand\ga{\gamma}

\newcommand{\Lam}{\Lambda}
\DeclareMathOperator{\Pic}{Pic}

\DeclareMathOperator{\Ext}{Ext}
\DeclareMathOperator{\Hom}{Hom}

\newcommand{\ra}{\ensuremath{\rightarrow}}

\def\eea{\end{eqnarray*}}
\def\bea{\begin{eqnarray*}}

\newcommand\dual{\mathrel{\raise3pt\hbox{$\underline{\mathrm{\thinspace d
\thinspace}}$}}}
\newcommand\qe{\ifhmode\unskip\nobreak\fi\quad $\Box$}       

\def\BOX{\hfill\lower.5\baselineskip\hbox{$\Box$}}

\newtheorem{theo}{Theorem}[section]

\newtheorem{remarkk}[theo]{Remark}
\newenvironment{rem}{\begin{remarkk}\rm}{\end{remarkk}}

\newenvironment{ex}{\begin{example}\rm}{\end{example}}

\setlength{\parindent}{0pt} 

%
\usepackage{hyperref}
%
%




\title [ Manifolds with trivial  Chern classes]{ Manifolds with  trivial  Chern classes I: Hyperelliptic  Manifolds and a question by Severi}

\author{Fabrizio Catanese}
\address {Mathematisches Institut der Universit\"at Bayreuth\\
NW II,  Universit\"atsstr. 30\\
95447 Bayreuth}
\email{fabrizio.catanese@uni-bayreuth.de}
\address{  Korea Institute for Advanced Study, Hoegiro 87, Seoul, 
133--722.}
\thanks{AMS Classification: 14F, 14K, 14C25\\
 }
 
 \date{\today}

\begin{document}

\begin{abstract}

 We  give a negative answer to a question posed by Severi in 1951, whether the Abelian Varieties are the only projective manifolds with
trivial  Chern classes. 

 By Yau' s celebrated result, compact K\"ahler manifolds with  trivial Chern classes
 must be flat, that is, they belong to the class of Hyperelliptic Manifolds (quotients $T/G$  of a complex torus $T$ by the free action of a finite  group $ G$).

We exhibit simple examples of projective Hyperelliptic Manifolds 
which are not Abelian varieties and whose Chern classes are zero not only in integral cohomology, but also in the Chow ring.

We prove moreover that the Bagnera-de Franchis manifolds (quotients $T/G$  as
above but where the   group $ G$ is cyclic) have topologically trivial tangent bundle.

Our results  naturally lead to the question of classifying all compact K\"ahler manifolds with
topologically trivial tangent bundle, and all the counterexamples to Severi's question.

\end{abstract}

\maketitle

\begin{dedication}
In   memory of   Mario Baldassarri (1920-1964).
\end{dedication}

\tableofcontents

\section*{Introduction and history of the problem.}

The purpose of this article is  to give a negative answer to a question raised by Severi 
in 1951\cite{severi} \footnote{ the question was formulated in terms of the so-called canonical systems 
$K_0(X) , \dots, K_{n-1}(X)$ of a projective manifold, but these were shown in 1955 by  Nakano \cite{nakano} to be the so called  Chern classes of the cotangent bundle
 (see \cite{atiyah} for an historical  account and \cite{fulton} as a general reference)},
of which I became aware reading a paper by 
Baldassarri \cite{baldassarri}: Severi  asks whether the Abelian varieties 
can be characterized as the projective manifolds whose   Chern classes are all trivial.

For a projective complex Manifold, 
given a vector  bundle $\sE$ on $X$, we have (by Grothendieck's  method \cite{grothendieck})
 Chern classes $c_i(\sE) \in \sA^i(X)$ in the Chow ring of $X$ of cycles modulo rational equivalence
 and their respective images, 
 the integral Chern classes $c_{i, \ZZ}(\sE) \in H^{2i}(X, \ZZ)$,
  the rational  Chern classes $c_{i, \QQ}(\sE) \in H^{2i}(X, \QQ)$, 
 and the real Chern classes $c_{i, \RR}(\sE) \in H^{2i}(X, \RR)$.
 
 The latter classes make also sense if  $X$ is  a cKM = compact K\"ahler Manifold.
 
The Chern classes of $X$ are the Chern classes of the  tangent bundle $\Theta_X$, 
hence  we get a series of homomorphic images 
 $$ c_i(X ) \mapsto c_{i, \ZZ}(X) \mapsto c_{i, \QQ}(X) \mapsto c_{i, \RR}(X),$$
 where the last map is always injective.
 
 Therefore there are three questions, 
 
 \begin{itemize}
 \item
  (`Severi's question') classify all complex projective Manifolds $X$ with all  Chern classes trivial in the Chow ring;
  \item
  classify all cKM's  $X$ with trivial integral Chern classes;
  \item
  classify all cKM's $X$ with trivial rational Chern classes.
  \end{itemize}
  And only the third question is fully answered, since 1978 \cite{yau}.
  
  \bigskip
  
As we said,  Severi's question has a negative answer.

 If we assume that the all the Chern classes are zero in integral homology
 ($c_i(X) = 0 \in H^{2i} (X, \ZZ),  \forall i$) we shall see that  an  example is given already in dimension 2 by the Hyperelliptic surfaces.

If we make the stronger assumptions that the Chern classes are  zero in the Chow ring of $X$, then 
the counterexamples
start in dimension 3, since for hyperelliptic surfaces 
 $ c_1(X) \neq 0 \in Pic(X)$. 
 
 \medskip
 
 This is our full result:
 
 \begin{theorem}
(a) The tangent bundle of a Bagnera-de Franchis manifold $X = T/G$ ($X$ is the quotient of a complex torus $T$ by a cyclic group $G$ acting freely and containing no translations)
is topologically trivial, in particular  all its integral
Chern classes $c_i(X) =0 \in H^*(X,\ZZ)$.

(b) There are projective Bagnera-de Franchis manifolds $X = T/G$, which are not complex tori,
such that all its 
Chern classes $c_i(X) $ are  zero in the Chow ring of $X$.

(c) There are some Hyperelliptic manifolds  $X = T/G$ ( these are the compact K\"ahler manifolds with trivial Chern classes in rational cohomology) 
such that not all their  integral 
Chern classes $c_i(X) \in H^*(X,\ZZ)$ are equal to zero.

\end{theorem}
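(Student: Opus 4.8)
The plan is to treat the three assertions separately, since they rest on increasingly delicate information about the cohomology and Chow ring of a quotient $T/G$.

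For part (a) I would argue as follows. Let $T = V/\Lambda$ be a complex torus and $G = \langle g\rangle$ a cyclic group of order $m$ acting freely, with linear part $\rho(g)\in\GL(V)$ (no translations means we can conjugate so that $G$ acts on $T$ by $x\mapsto \rho(g)x + b$ for a fixed $b$, but the crucial point is the linear part). The tangent bundle $\Theta_X$ of $X = T/G$ is the bundle associated to the trivial $G$-equivariant bundle $V\times T$ via the $G$-action on $V$ through $\rho$; equivalently $\Theta_X$ is the flat vector bundle determined by the representation $\rho\colon G\to\GL(V)\subset\GL(n,\C)$ composed with $\pi_1(X)\onto G$. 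Since $G$ is cyclic, $\rho$ is a direct sum of one-dimensional representations $\chi_j$, each of which is a root of unity character $G\to\C^*\subset\GL(1,\C)$, and such a character lands (after an isotopy of $\GL(1,\C)=\C^*$ onto $U(1)\simeq S^1$, and then a further contraction of $S^1$ in $\GL(1,\C)$) in the identity component; more cleanly: a flat line bundle with finite order monodromy is topologically trivial because $H^1(X,U(1))\to H^2(X,\Z)$ kills torsion classes coming from finite subgroups — actually the cleanest statement is that a flat bundle with structure group a finite cyclic group, viewed in $\GL(n,\C)$, is \emph{homotopically} trivial as a $\GL(n,\C)$-bundle since $\GL(n,\C)$ is connected and we may deform the finite-order monodromy matrices to the identity inside $\GL(n,\C)$. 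This gives topological triviality of $\Theta_X$, hence $c_i(X) = 0$ in $H^*(X,\Z)$. The one subtlety I must be careful about is that "deform the monodromy to the identity" must be done compatibly with the relation $g^m = 1$; this is where one genuinely uses that $G$ is \emph{cyclic}: a single matrix of finite order in $\GL(n,\C)$ can be connected to the identity by a path, but for a noncyclic $G$ one cannot in general simultaneously deform a set of generators while preserving all relations. I expect this to be the main obstacle in part (a), and the honest fix is to pass through the associated $S^1$-bundle picture: diagonalize $\rho(g) = \diag(\zeta^{a_1},\dots,\zeta^{a_n})$ with $\zeta = e^{2\pi i/m}$, observe each eigenline gives a flat $S^1$-bundle classified by an element of $H^1(X,\Z/m)$, and show its image in $H^2(X,\Z)$ under the Bockstein-type map is zero because it is the image of a class in $H^2(X,\Z)$ of infinite-divisibility obstruction — concretely because the character extends to a character of the torus $T$ itself, so the flat bundle extends to a flat bundle on a larger connected group and is thus topologically trivial.

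For part (b) I would exhibit an explicit example. Take $T = E_1\times E_2\times E_3$ a product of elliptic curves with $E_1$ having an automorphism of order $3$ (so $E_1 = \C/\Z[\omega]$, $\omega = e^{2\pi i/3}$), and let $G = \Z/3$ act by $(z_1,z_2,z_3)\mapsto(\omega z_1 + \tfrac13, \omega z_2, \omega z_3)$ (the translation by $\tfrac13$ on the first factor makes the action free). This is a Bagnera–de Franchis threefold, not a torus. I must then compute the Chern classes $c_i(X)$ in $\sA^*(X)$ and show they vanish. The tangent bundle pulls back on $T$ to $\Oh_T^{\oplus 3}$ with a $G$-linearization, so $c_i(\Theta_X)$ lives in the part of $\sA^*(X)$ detected by $\sA^*(T)^G$ plus possible torsion/exotic contributions; the decomposition $\sA^*(T/G)\otimes\Q \iso \sA^*(T)^G\otimes\Q$ and $c_i(\Theta_X)\otimes\Q = 0$ handles the rational part, and the remaining work is to show there is no torsion obstruction in the integral (rather: in the full, not-just-rational) Chow group. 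Here I would use that $\sA^1(X) = \Pic(X)$ and compute $\Pic(X)$ directly: $c_1(\Theta_X) = -K_X$, and one checks $K_X$ is trivial in $\Pic(X)$ (not merely numerically or torsion) precisely when the character by which $G$ acts on $\bigwedge^n V = \det\rho$ is trivial, i.e. when $\sum a_j \equiv 0 \pmod m$ — in the example $\sum a_j = 1+1+1 = 3 \equiv 0\pmod 3$, so $K_X = 0$ in $\Pic(X)$ and $c_1 = 0$ in the Chow ring. For $c_2, c_3$ one uses that $\sA^i(X)$ for $i\ge 2$ on such a quotient of a product of curves is built from $\sA^i(T)^G$ with no extra torsion (the higher Chow groups of an abelian variety being torsion-free in the relevant degrees, or by a direct transfer argument using that $|G|$ is invertible after we have already killed $c_1$), together with $c_i(\Theta_X)\otimes\Q = 0$. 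I expect the bookkeeping for $c_2$ and $c_3$ to be the most laborious part, but not conceptually hard once $c_1$ is handled.

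For part (c) I would produce a Hyperelliptic manifold $X = T/G$ — necessarily with $G$ \emph{non-cyclic}, in view of part (a) — for which some $c_i(X)$ is nonzero in $H^*(X,\Z)$. The natural candidate is a bielliptic-type construction with $G = (\Z/2)^2$ acting on $T = E_1\times E_2\times E_3$ (or a higher-dimensional torus), chosen so that the associated flat tangent bundle, with monodromy landing in a finite \emph{noncyclic} subgroup of $\GL(n,\C)$, is not homotopically trivial. Concretely I would compute $c_1(X)$ (or $c_2(X)$) in $H^2(X,\Z)$ using the identification of $H^*(X,\Z)$ with group cohomology of $\pi_1(X)$ — an extension of $G$ by $\Lambda$ — via the Hochschild–Serre spectral sequence $H^p(G, H^q(T,\Z))\Rightarrow H^{p+q}(X,\Z)$, and show that $c_1(\Theta_X)$ has a nonzero component in $H^2(G,\Z) = H^2(G,\Z^{G\text{-triv}})$, the torsion of which is nontrivial for $G = (\Z/2)^2$. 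The main obstacle here is to pick the action so that the relevant class genuinely survives the spectral sequence (is not killed by a differential) and is detected; this is a finite computation in group cohomology of $(\Z/2)^2$, for which I would lean on the known ring structure $H^*((\Z/2)^2,\Z)$ and the explicit form of the $G$-module $H^1(T,\Z) = \Lambda^\vee$. Once such an $X$ is in hand, the vanishing of the \emph{rational} Chern classes is automatic from Yau (or from the flatness of $\Theta_X$, whose rational Pontryagin/Chern classes vanish), giving precisely the asserted contrast with the integral statement.
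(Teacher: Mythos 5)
Your reduction of part (a) to the character line bundles $L_i$ (splitting $V$ into eigenspaces of the cyclic linear part, so that $\Theta_X$ is a sum of flat line bundles attached to characters of $G$) agrees with the paper, but the step you then lean on is false: a flat line bundle with finite-order monodromy is \emph{not} in general topologically trivial --- its first Chern class is a torsion class in $H^2(X,\ZZ)$ which can be nonzero (the canonical bundle of an Enriques surface, and indeed the character bundles in part (c) of the very statement you are proving, where $c_1(L_i)\neq 0$). The ``deform the monodromy matrix to the identity inside $\GL(n,\CC)$'' argument does not work: the monodromy is a representation of $\pi_1(X)=\Gamma$, not of $\ZZ$; it factors through the finite group $G$, representations of a finite group are rigid, and moving the image of a single generator of $\Gamma$ destroys the relations involving the lattice $\Lambda$. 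What part (a) actually requires is that for $G$ cyclic the composite $H^1(G,\CC^*)\to\Pic(X)\to H^2(X,\ZZ)$ vanishes, i.e.\ that the character bundles lie in $\Pic^0(X)$; equivalently, that the edge map $H^2(G,\ZZ)\to H^2(X,\ZZ)$ is zero, i.e.\ that $\psi\colon H^1(T,\ZZ)^G\to H^2(G,\ZZ)$ is surjective (morally: the torsion of $\Gamma^{ab}$ comes from $\Lambda$). This is a genuine arithmetic fact about Bagnera--de Franchis groups, which the paper proves from the structure theorem for BdF manifolds and the computation of $\Alb(X)$, and it is precisely what fails for non-cyclic $G$; your closing remark that ``the character extends to a character of the torus $T$'' is not a substitute for this lemma.

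In part (b) your example is not free: an affine map of an elliptic curve whose linear part is $\neq 1$ always has a fixed point (here $\omega-1$ is an isogeny of $\CC/\ZZ[\omega]$, so $\omega z_1+\tfrac13=z_1$ is solvable), hence your $\ZZ/3$ does not act freely on $E_1\times E_2\times E_3$; the translation part must sit on a factor on which the linear part is the identity, as in the paper's example $g(a_1,a_2,a_3)=(a_1+\eta_1,-a_2,-a_3)$ with $G=\ZZ/2$. More importantly, the vanishing of $c_2,c_3$ in the Chow ring is exactly the hard point, and your sketch does not settle it: the transfer argument only shows the $c_i$ are $|G|$-torsion in $\sA^*(X)$, and the claim of ``no extra torsion'' in the higher Chow groups of such quotients is unsupported. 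The paper's argument is elementary and specific: $\Theta_X\cong\hol_X\oplus L\oplus L$ with $2L=0$ in $\Pic(X)$ and, by the triviality of the edge map above, $L\in\Pic^0(X)$ is pulled back from the Albanese \emph{curve}, so $c(\Theta_X)=(1+c_1(L))^2=1+\pi^*(c_1(\sL)^2)=1$ in the Chow ring because $\sA^2$ of a curve vanishes; any corrected example should be arranged so that such a pull-back-from-a-curve argument applies. Your plan for part (c) is in substance the paper's (which takes $(\ZZ/2)^2$ acting freely on $E_1\times E_2\times A_3$ with $H^1(T,\ZZ)^G=0$, so that $\Pic^0(X)=0$, $\psi=0$, and $c_1(X)$ is a nonzero element of $H^2(G,\ZZ)\subset H^2(X,\ZZ)$), but as written it remains a plan: the explicit free action and the verification that the class survives the spectral sequence are exactly the content of the paper's computation and still need to be supplied.
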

 
  \bigskip
  
  To clarify the history of the problem, recall that the characterization of the Manifolds with all Chern classes zero in rational (equivalently, in real) cohomology 
  was solved  in 1978, thanks to Yau's celebrated theorem \cite{yau} 
  about  the existence of 
 K\"ahler-Einstein metrics on manifolds with $c_1 (X) = 0 \in H^2(X, \QQ)$.
 
From this result, as explained in Kobayashi's book, page 116 of \cite{kobayashi},  follows that the 
compact  K\"ahler manifolds   (cKM) with $c_1(X) = c_2(X)= 0 \in H^*(X, \QQ)$
are the Hyperelliptic manifolds, the quotients of a complex torus by the free action of a finite group $G$.
Indeed, once one knows that we have a  K\"ahler-Einstein metric, that the manifold is flat had been proven by Apte
\cite{apte}
in the 50's. 

Hence, after 1978, Severi's question became  a question concerning Hyperelliptic manifolds.

The technical core of this article is the investigation of the Picard group
of line bundles on Hyperelliptic Manifolds, via some Grothendieck spectral sequences:  the investigation becomes easier 
when the group $G$ is cyclic, because of the vanishing of the group of Schur multipliers
and in view of several other very special features.

Then we are able to use the fact that, if the group $G$ is Abelian, then the tangent bundle 
of $X$ is a direct sum of line bundles: this  is very convenient  since  a line bundle
is topologically trivial if and only if its integral Chern class is zero.

The analysis of the general case where $G$ is not Abelian seems challenging.

\bigskip

Our above  theorem shows   that the situation for a general Hyperelliptic Manifold 
 is not fully clear: there are examples with  Chern classes trivial also in integral cohomology, or even in the Chow ring 
of rational equivalence classes,
but there are also examples with nontrivial  Chern classes  in integral cohomology. 

Hence our theorem raises the 
interesting problem of 
a  complete classification
of the  (hyperelliptic) manifolds with  $c_i(X) = 0 \in  H^*(X, \ZZ) \   \forall i$,
or with topologically trivial tangent bundle, or (when they are algebraic) with trivial Chern classes
in the Chow ring.

\section{Hyperelliptic manifolds and varieties}

We recall here some basic facts about the theory of Hyperelliptic manifolds,
starting from their definition.

\medskip

The French school   of Appell, Humbert, Picard, Poincar\'e   defined the Hyperelliptic Varieties as those smooth projective varieties whose universal covering is
biholomorphic to $\CC^n$ (in particular the Abelian varieties are in this class). For $n=1$ these are just the elliptic curves, whereas  the Hyperelliptic varieties of dimension  $2$ were classified by 
 Enriques  and Severi  (\cite{es}) and by  Bagnera and De Franchis (\cite{bdf}): both pairs were awarded the prestigious Bordin Prize for this achievement.
 
 Kodaira \cite{kod} showed instead that if we
take the wider class of compact complex manifolds of dimension $2$ whose universal covering is
$\CC^2$, then there are other non algebraic and non K\"ahler surfaces, called nowadays Kodaira surfaces (beware: these are not the so-called Kodaira fibred surfaces!).

Iitaka conjectured that if a compact  K\"ahler Manifold $X$ has universal covering 
biholomorphic to $\CC^n$, then necessarily  $X$ is a quotient $ X = T / G$ of a complex torus $T$ by the free action of a finite group $G$ (which we may assume to contain no translations).

The conjecture by Iitaka was proven in dimension $2$ by Kodaira, and in dimension $3$ by Campana and Zhang \cite{cz}.
Whereas it was shown in \cite{chk} that,  if the abundance conjecture holds, then a   projective smooth variety $X$ with universal covering $\CC^n$ is a Hyperelliptic variety according to the following definition.

\begin{defin}
A Hyperelliptic Manifold $X$  is defined to be a quotient $ X = T / G$ of a complex torus $T$ by the free action of a finite group $G$ which contains no translations. 

We say that $X$ is a  Hyperelliptic Variety if moreover the torus $T$ is projective, i.e., it is an Abelian variety $A$, that is, 
$A$ possesses an ample line bundle $L$.
\end{defin}

 If the group $G$ is a cyclic group $\ZZ/m$, then such a quotient is called (\cite{bcf}, \cite{topmethods}) a Bagnera-De Franchis manifold.
 
In  dimension $n=2$, a hyperelliptic manifold $X$ is necessarily projective, and $G$ is necessarily cyclic, whereas in dimension $ n\geq 3$ the only examples with $G$ non Abelian have 
$G = D_4$ and were classified in \cite{U-Y} and \cite{cd} (for us $D_4$ is the dihedral group  of order $8$).

Indeed, (see for instance \cite{ccd}) every Hyperelliptic Manifold is a deformation of a Hyperelliptic Variety, so that
a posteriori the two notions are related to each other, in particular the underlying differentiable manifolds are the same.

There are at least three important research directions concerning Hyperelliptic Varieties:

\begin{enumerate}
\item
Establish  Iitaka's conjecture.
\item
Understand and classify Hyperelliptic Manifolds.
\item
Construct interesting manifolds as submanifolds (e.g., Hypersurfaces) of Hyperelliptic Manifolds.

\end{enumerate}

Question (1) is essentially a question about fundamental groups of  compact K\"ahler Manifolds: since (cf. for instance \cite{topmethods} Coroll. 82, page 356)
any compact K\"ahler Manifold $X$ with contractible universal cover and with $\pi_1(X)$ Abelian is a complex torus.
Hence the main point is to show that if a compact  K\"ahler Manifold $X$ has universal covering 
biholomorphic to $\CC^n$, then necessarily  $\pi_1(X)$ has an Abelian subgroup of finite index.

More generally one can ask: 

\begin{question}
Given a compact complex manifold with universal covering $\tilde{X} \cong \CC^n$,
is the fundamental group $\pi_1(X)$ a solvable group?
\end{question}

Question (2) above  is  instead essentially a difficult algebraic question: since  if $X$ is a Hyperelliptic Manifold,
and $\Ga : = \pi_1(X)$, then we have an exact  
sequence of groups  
$$ (*) \ \ 0 \ra \Lam \ra \Ga \ra G \ra 1 ,$$
where $\Lam = \pi_1(T) \cong \ZZ^{2n}.$

This leads  (see \cite{Catanese-Corvaja}) to the following definition of an 
 {\bf abstract torsion free even Euclidean cristallographic group}.

\begin{defin}\label{cristall}
(i) We  say that a group $\Ga$ is an abstract Euclidean cristallographic group if there exists an exact
sequence of groups
$$ (*) \ \ 0 \ra \Lam \ra \Ga \ra G \ra 1 $$
such that
\begin{enumerate}
\item
$G$ is a  finite group
\item
$\Lam$ is free abelian (we shall denote  its rank by $r$)
\item
Inner conjugation $ Ad : \Ga \ra  Aut (\Lam) $ has Kernel exactly $\Lam$,
hence $ Ad$  induces an embedding, called {\bf Linear part},
$$ L : G \ra GL (\Lam) : = Aut (\Lam) $$
(thus $L(g) (\la) =  Ad (\ga) (\la) = \ga \la \ga^{-1}, \ \forall \ga {\rm \ a \ lift \ of }\  g$) 

\end{enumerate}

(ii) A cristallographic group $\Ga$ is said to be {\bf even} if:
\begin{itemize}
\item
(ii.1)  $\Lam$ is a free abelian group  of even rank $ r = 2n$
\item
(ii.2) $\Ga$ is {\bf$G-$even}, equivalently,  there exists a Hodge decomposition   $$\Lam \otimes_{\ZZ} \CC = H^{1,0} \oplus \overline{H^{1,0} }$$ 
which
 is invariant for the $G$-action (i.e., $H^{1,0}$ is a $G$-invariant subspace); this is equivalent to:
 \item 
 (ii.2 bis) considering the associated   faithful representation $ G \ra Aut (\Lam)$,  for each real representation $\chi$ of $G$, 
the $\chi$-isotypical component  {\bf $$ M_{ \chi} \subset \Lam \otimes_{\ZZ} \RR$$ has even dimension } (over $\CC$).
 \end{itemize}
 
 (iii) $\Ga$ is said to be torsion-free if there are no elements of finite order inside $\Ga$.

(iv) An {\bf affine realization defined over a field $ K \supset \ZZ$ }  of an abstract Euclidean cristallographic group $\Ga$  is a homomorphism
(necessarily injective) 
$$\rho : \Ga \ra Aff (\Lam \otimes_{\ZZ} K)$$ such that 

[1] $\Lam$ acts by translations on $ V_K := \Lam \otimes_{\ZZ} K$,  $ \rho(\la) (v) =   v + \la$,

[2]  for any $\ga$ a lift of $g \in G$ we have:
$$  V_K \ni v \mapsto \rho(\ga) (v) =  Ad (\ga) v + u_{\ga} = L(g) v + u_{\ga}, \ {\rm for \ some } \ \ u_{\ga} \in  V_K.$$ 

(v) More generally we can say that an affine realization of $\Ga$  is obtained via a lattice $\Lam ' \subset \Lam \otimes_{\ZZ} \QQ$
if there exists a homomorphism $\rho ' : \Ga \ra Aff (\Lam ')$ such that $\rho = \rho ' \otimes_{\ZZ}  K$ (then necessarily $\Lam \subset \Lam'$).

\end{defin}

Extending previous classical results of Bieberbach \cite{bieb1, bieb2}, in \cite{Catanese-Corvaja} was proven:

   \begin{thm}\label{affinereal}
Given an abstract Euclidean cristallographic group there is one and only one  class of affine realization, for each    field $K \supset \ZZ$.

There is moreover an effectively computable  minimal number $d \in \NN$ such that the  realization is obtained via $\frac{1}{d} \Lam$.
\end{thm}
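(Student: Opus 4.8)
The plan is to convert the statement into one about the low-degree group cohomology of $G$ with coefficients in $\Lam$ and in its scalar extensions, where existence and uniqueness are forced by the simple fact that $K \supset \ZZ$ entails $\QQ \subseteq K$.

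First I would fix a set-theoretic section $s\colon G \to \Ga$ with $s(e)$ the identity, and let $f \in Z^2(G,\Lam)$ be the associated normalized $2$-cocycle, determined by $s(g)s(h) = f(g,h)\,s(gh)$; here $G$ acts on $\Lam$ through the linear part $L$, and $[f]\in H^2(G,\Lam)$ is the class of the extension $(*)$. Writing every element of $\Ga$ uniquely as $\la\,s(g)$ with $\la\in\Lam$, a short computation shows that giving an affine realization $\rho$ over $K$ with $\rho(\la\,s(g))\colon v\mapsto L(g)v + u_{s(g)} + \la$ is \emph{the same datum} as giving a $1$-cochain $u\colon G\to \Lam\otimes_\ZZ K$, $u(g):=u_{s(g)}$, with $\de u = f$ in $C^2(G,\Lam\otimes_\ZZ K)$: the homomorphism property of $\rho$ is precisely the identity $L(g)u(h) - u(gh) + u(g) = f(g,h)$, while injectivity of any such $\rho$ is automatic because $L$ is faithful by hypothesis (3) of Definition \ref{cristall} (so $\rho(\la s(g)) = \mathrm{id}$ forces $g=e$, and then $\la = -u(e) = 0$ by the normalization).

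For existence and uniqueness I would then use that $H^1(G,\Lam\otimes_\ZZ K) = H^2(G,\Lam\otimes_\ZZ K) = 0$: since $K$ contains $\QQ$, the module $\Lam\otimes_\ZZ K$ is uniquely divisible, so multiplication by $|G|$ is invertible on its group cohomology, while it also kills $H^i(G,-)$ for $i>0$; hence these groups vanish. Vanishing of $H^2$ produces a solution $u$ of $\de u = f$, which may moreover be chosen with values in $\Lam\otimes_\ZZ\QQ$ and then base-changed to any $K$ (so the realization exists and, as in part (v) of Definition \ref{cristall}, is obtained from a lattice inside $\Lam\otimes_\ZZ\QQ$). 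Vanishing of $H^1$ shows that any two solutions $u,u'$ of $\de u = f$ differ by a coboundary, $u - u' = \de w$ with $w\in\Lam\otimes_\ZZ K$, and one checks this is equivalent to $\rho'$ being the conjugate of $\rho$ by the translation by $w$; replacing $s$ by another section only moves $f$ within its cohomology class and $u$ accordingly, without altering the affine conjugacy class. This yields the unique class of affine realization for each $K\supset\ZZ$.

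For the minimal $d$, note that a realization in this class is obtained via the lattice $\tfrac1d\Lam$ exactly when the associated cochain $u$ takes values in $\tfrac1d\Lam$, since the linear parts $L(g)$ already preserve $\Lam\subset\tfrac1d\Lam$. I would then record the elementary equivalence
$$ f = \de u \ \text{for some}\ u\colon G\to \tfrac1d\Lam \quad\Longleftrightarrow\quad d\cdot[f] = 0 \ \text{in}\ H^2(G,\Lam): $$
$(\Rightarrow)$ holds because $\de(d\,u) = d\,f$ with $d\,u$ valued in $\Lam$; $(\Leftarrow)$ holds because $d[f]=0$ means $d\,f = \de v$ with $v\in C^1(G,\Lam)$, so $f = \de(\tfrac1d v)$. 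Hence the minimal $d$ is the order of the extension class $[f]$ in the \emph{finite} group $H^2(G,\Lam)$; it divides $|G|$ (which annihilates the cohomology of a finite group in positive degree), is independent of the chosen section, and is effectively computable because $H^2(G,\Lam)$ and the class $[f]$ can be extracted from any explicit presentation of $(*)$, e.g. via the bar resolution or a finite free resolution of $\ZZ$ over $\ZZ[G]$. I expect the only genuinely delicate part to be the bookkeeping in the dictionary of the second paragraph — matching up the homomorphism property, the normalization and automatic injectivity, the independence of the section, and the precise meaning of ``class'' (conjugacy by an affine transformation, in fact by a translation); once that dictionary is set up, the vanishing of $H^1$ and $H^2$ over $\QQ$ and the identification of $d$ with the order of $[f]$ are both short.
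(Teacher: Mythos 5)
Your proof is correct: the dictionary between affine realizations and $1$-cochains $u$ with $\de u = f$, the vanishing of $H^1(G,\Lam\otimes_\ZZ K)$ and $H^2(G,\Lam\otimes_\ZZ K)$ (since $K\supset\ZZ$ forces $\QQ\subseteq K$), and the identification of the minimal $d$ with the order of the extension class $[f]\in H^2(G,\Lam)$ yield exactly Theorem \ref{affinereal}, including effectivity. The paper itself gives no proof but quotes the result from \cite{Catanese-Corvaja}, and your cohomological argument is essentially the standard route taken there, so only the routine bookkeeping you already flag remains.
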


 The above  theorem of  \cite{Catanese-Corvaja} says in particular  that conversely, given such a torsion free  even Euclidean cristallographic 
 group $\Ga$, there are Hyperelliptic Manifolds  with $\pi_1(X) \cong \Ga$.
 
  Moreover the Hyperelliptic Manifolds are the compact K\"ahler Manifolds which are  $K(\Ga, 1)$ 's  for abstract torsion-free even Euclidean cristallographic groups $\Ga$ (recall that a $K(\Ga, 1)$ is a space $X$ with contractible universal covering and with 
$\pi_1(X) \cong \Ga$).

 Euclidean cristallographic groups were investigated by Bieberbach (\cite{bieb1, bieb2}) who proved that, in each dimension, there 
 is a finite set of isomorphism classes
 (the proof uses Minkowski's geometry of numbers, but is to our knowledge not effective and does not lead to a classification). 
 
 We end this section with an observation, on the automorphism group of Hyperelliptic Manifolds, which will
 be quite important in the sequel (part II).
 
 \begin{proposition}\label{hyp}
 Let $X = T/G$ be a Hyperelliptic Manifold. 
 
 Then its group of Automorphisms is the quotient
 $ Aut(X) = Aut(T)^G / G$, where  
  $$   Aut(T)^G  : = N_{Aut(T)}(G) \subset Aut(T)$$
   is the Normalizer of $G$ in $Aut(T)$.
   
   In particular the connected component of the Identity $ Aut^0(X)$ is isomorphic to
   the subtorus $$ T' \subset T, \ T' = \{ x | g(x) = x , \forall g \in G\}.$$
   
   The group $ Aut^0(X)$ may then be trivial if $n : = dim (X) \geq 3$ and $G$ is not cyclic.
   
   Equivalently,  $ Aut^0(X)$ is trivial if and only if $H^1(X, \hol_X)=0$.
 \end{proposition}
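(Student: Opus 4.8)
The plan is to determine $\Aut(X)$ by lifting automorphisms through the étale cover $\pi\colon T\to X$, then to read off $\Aut^0(X)$ from the space of global holomorphic vector fields, and finally to convert the vanishing of $\Aut^0(X)$ into a Hodge number using that $X$ is flat; only the very last assertion, that $\Aut^0(X)$ can actually be trivial, will require an explicit construction rather than a computation.

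\emph{The formula $\Aut(X)=\Aut(T)^G/G$.} Let $q\colon\CC^n\to X$ be the universal cover, $\Ga=\pi_1(X)$ its deck group, so that $\Lam=\pi_1(T)\trianglelefteq\Ga$, $\Ga/\Lam=G$, $T=\CC^n/\Lam$. By Bieberbach's theorems $\Lam$ --- the translation subgroup, equivalently the unique maximal normal abelian subgroup of finite index --- is characteristic in $\Ga$. Given $\fie\in\Aut(X)$, lift it to a biholomorphism $\tilde\fie$ of $\CC^n$; by covering theory $\tilde\fie\,\Ga\,\tilde\fie^{-1}=\Ga$, hence $\tilde\fie\,\Lam\,\tilde\fie^{-1}=\Lam$, so $\tilde\fie$ descends to $\bar\fie\in\Aut(T)$ which normalizes $G\subset\Aut(T)$. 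This defines a homomorphism $\Aut(X)\to\Aut(T)^G/G$, inverse to the obvious map $\Aut(T)^G/G\to\Aut(X)$ (well-definedness of the latter is clear; surjectivity is exactly the lift just produced; injectivity holds because an element of $\Aut(T)^G$ inducing the identity on $T/G$ sends each $x$ to $g(x)$ for a locally constant --- hence, $T$ being connected, constant --- $g\in G$).

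\emph{The identity component.} $\Aut(X)$ is a complex Lie group with Lie algebra $H^0(X,\Theta_X)$; as $\pi$ is étale, $H^0(X,\Theta_X)=H^0(T,\Theta_T)^G$, and $H^0(T,\Theta_T)$ is the space of translation-invariant fields, i.e. $\mathrm{Lie}(T)=\CC^n$, on which $G$ acts through its linear part $L$; hence $\mathrm{Lie}\,\Aut^0(X)\cong(\CC^n)^{L(G)}$. Let $T'\subseteq T$ be the subtorus with $\mathrm{Lie}(T')=(\CC^n)^{L(G)}$, i.e. the identity component of $\{x\in T:L(g)x=x\ \forall g\in G\}$. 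For $b\in T'$ the translation $t_b$ commutes with every $g\in G$ (conjugating $x\mapsto L(g)x+u_g$ by $t_b$ changes the translation part by $b-L(g)b=0$), so $t_b\in\Aut(T)^G$; the induced map $T'\to\Aut(X)$ is injective (a translation lying in $G$ is the identity, since $G$ contains no translations), has image inside $\Aut^0(X)$, and is onto it because a closed subtorus of a connected complex Lie group of equal dimension is everything. Hence $\Aut^0(X)\cong T'$, in particular it is a complex torus.

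\emph{The numerical criterion and the non-cyclic examples.} By the previous step $\dim\Aut^0(X)=\dim(\CC^n)^{L(G)}$; since $X$ is compact Kähler, $h^1(X,\hol_X)=h^{0,1}(X)=h^{1,0}(X)=h^0(X,\Omega^1_X)=h^0(T,\Omega^1_T)^G=\dim\big((\CC^n)^{\vee}\big)^{L(G)}$, and a finite-dimensional representation of a finite group and its dual have invariant subspaces of the same dimension, so $h^1(X,\hol_X)=\dim\Aut^0(X)$ --- the asserted equivalence. For the remaining claim: if $G$ is cyclic with generator $g$ of order $m$, torsion-freeness of $\Ga$ forces $1$ to be an eigenvalue of $L(g)$ (otherwise $\sum_{i=0}^{m-1}L(g)^i=0$, forcing $g^m=1$), so $(\CC^n)^{L(G)}\neq0$; the same nonvanishing holds trivially for $n\le2$. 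But for $n=3$, taking $T=E_1\times E_2\times E_3$ and $G=(\ZZ/2)^2$ with each nonzero element acting as $-1$ on two of the factors and as the translation by a nonzero $2$-torsion point on the remaining factor --- the translations arranged so that the action is free --- one gets $(\CC^3)^{L(G)}=0$, hence $\Aut^0(X)$ trivial; the non-abelian $D_4$-examples of \cite{U-Y}, \cite{cd} serve equally well. Everything here is formal except Bieberbach's rigidity (invoked to descend automorphisms to $T$) and, the one point where one must genuinely build something and verify a condition, the fixed-point-freeness of the $(\ZZ/2)^2$-action witnessing that $\Aut^0(X)$ can vanish.
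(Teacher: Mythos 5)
Your proposal is correct and follows essentially the same route as the paper: lift automorphisms via the characteristic subgroup $\Lam \subset \Ga$ to obtain $\Aut(X)=\Aut(T)^G/G$, identify $\Aut^0(X)$ with the subtorus fixed by the linear action, use the eigenvalue-$1$/torsion-freeness argument to rule out the cyclic case, and convert triviality of $\Aut^0(X)$ into $H^1(X,\hol_X)=0$ by comparing invariants of a representation and its dual. The only minor variations are that you compute $\mathrm{Lie}\,\Aut^0(X)=H^0(X,\Theta_X)^{ }=\left(\CC^n\right)^{L(G)}$ (Bochner--Montgomery) where the paper instead notes that the linear-part homomorphism $\Aut(X)\to \GL(\Lam)$ has discrete image so $\Aut^0(X)$ consists of translations, and that you exhibit an explicit free $(\ZZ/2)^2$-action (essentially the example the paper itself uses later for nonvanishing integral Chern classes) rather than citing the $D_4$ examples of \cite{U-Y}, \cite{cd}.
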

 
\begin{proof}
The subgroup $\Lam$ is a characteristic sugbgroup of $\Ga$, hence, for each $\Phi \in Aut(X)$, 
$\Phi$ lifts to an automorphism $\phi \in Aut(T)$, which normalizes $G$.

Then the linear part of $\phi$ defines a homomorphism of $Aut(X) \ra GL(\Lam)$,
and, since the image is discrete, $ Aut^0(X)$ consists of translations.
A translation $ z \mapsto z + b $ normalizes $G$ if and only if $G(b) = b$.

Finally, writing $ T = V/\Lam$,  the subspace $V^G: =  \{ v | Gv = v\}$ is not  trivial
if $G$ is cyclic, since $G$ acts freely, but for $ n \geq 3$ we have the 
 case of $G=D_4$, where $V^G =0$ (see \cite{cd}).
 
 Now,   $V^G=0$ if and only if  $(V^{\vee})^G=0$, equivalently,  $H^1(X, \hol_X)=0$.
 
 \end{proof}
 
 \section{The Picard group of a Hyperelliptic Manifold}
 
 If $X  = T/G$ is a Hyperelliptic Manifold,  we want to analyze the exponential exact sequence:
 
 $$ 0 \ra  H^1 (X, \ZZ) \ra  H^1 (X, \hol_X) \ra   \Pic (X) = H^1 (X, \hol^*_X) \ra H^2 (X, \ZZ) \ra H^2 (X, \hol_X) $$
 which leads to:
 $$ 0 \ra   \Pic^0 (X) = H^1 (X, \hol_X) /  H^1 (X, \ZZ)   \ra   \Pic (X)  \ra  NS(X) \subset H^2 (X, \ZZ)  .$$
 
  Since $ f : T \ra X$ is finite, we have, by the Leray spectral sequence,   
  $$ ( *) \  H^i (X, \hol_X) =  H^i (T, \hol_T)^G$$ for $ i \geq 1.$
  
  We write $ T = V / \Lam$, and $ V = V_1 \oplus V_2$, in such a way that the linear part $L$ of the action of $G$ acts as the identity on $V_1$,
  while $V_2$ is a direct sum of nontrivial irreducible representations of $G$.   
  Follows then from Dolbeault's theorem: 
  $$  H^1 (X, \hol_X) \cong   \overline{ V_1}^{\vee} , \  H^2  (X, \hol_X) \cong \Lam^2 ( \overline{ V}^{\vee})^G =  \Lam^2 ( \overline{ V_1}^{\vee}) \oplus  \Lam^2 ( \overline{ V_2}^{\vee})^G.$$
  
 Write $\Lam_i : = \Lam \cap V_i$. Since $V_1$ is defined over $\QQ$ and contains $\Lam_1$ as a lattice, 
  we have an  exact sequence
  $$ 0 \ra \Lam_1 \oplus \Lam_2 \ra \Lam \ra \Lam^* \ra 0, $$ 
  where $\Lam^*$ is a finite  group, and we obtain
  $$ (00) \  0 \ra \Hom (\Lam , \ZZ) \ra  ( \Hom (\Lam_1 , \ZZ) \oplus \Hom (\Lam_2 , \ZZ) ) \ra \Ext^1(\Lam^* , \ZZ) \ra 0.$$

  Apply  now the Grothendieck spectral sequence 
  $$ H^p (G, H^q (T, \sF)) \Rightarrow H^{p+q} (X, f_* \sF^G), $$
  first to $\sF = \ZZ$, then to $\sF = \hol^*_T$. 
  
  In the first case $\sF = \ZZ$ we  get, since $H^1 (G, \ZZ) = \Hom (G, \ZZ) = 0$, and from the spectral sequence diagram
  (here $ \Lam^{\vee}  : = \Hom (\Lam , \ZZ)$)
   \begin{equation}
 \left(\begin{matrix} H^2 (T, \ZZ) ^G & \dots & \dots & \dots
 \cr (\Lam^{\vee})^G = H^1 (T, \ZZ) ^G & H^1 (G,  \Lam^{\vee}) &H^2 (G,  \Lam^{\vee}) &H^3 (G,  \Lam^{\vee}) 
 \cr \ZZ& 0 &H^2( G , \ZZ)&H^3( G , \ZZ)
\end{matrix}\right).
\end{equation} 
 that
  
  $$ (**) \  0    \ra    H^1(X, \ZZ) = H^1(\Ga , \ZZ) =\Hom (\Ga , \ZZ)  \cong  Ker (\psi) ,$$
  where 
  $$ \psi :  H^1 (T, \ZZ) ^G = \Hom (\Lam , \ZZ)^G  \ra H^2( G , \ZZ).$$
  
  Also, we have a filtration on $H^2(X, \ZZ)$ with graded pieces:
  $$ H^2( G , \ZZ) / Im (\psi) , \ Ker [\varphi:  H^1 (G, \Hom (\Lam , \ZZ)) \ra  H^3( G , \ZZ)], \ $$
  $$  Ker [ Ker [ H^2(T, \ZZ)^G \ra  H^2 (G, \Hom (\Lam , \ZZ))] \ra Coker (\varphi)].$$ 

Observe now that, due to exact sequence $ 0 \ra \ZZ \ra \CC \ra \CC^* \ra 0$, and since $H^i(G, \CC) = 0$ for $i \geq 1$, 
$$H^2( G , \ZZ) \cong H^1 (G, \CC ^*) \cong \Hom (G, \CC^*) \cong \Hom (G^{ab}, \CC^*),$$
while $H^3( G , \ZZ) \cong H^2 (G, \CC ^*)$, the group which classifies the central extensions
$$ 1 \ra \CC^* \ra G' \ra G \ra 1.$$

In the second case  ($\sF = \hol^*_T$) the Grothendieck spectral sequence yields  the exact sequence

$$ 0 \ra H^1(G, \CC^*) = \Hom (G, \CC^*)  \ra H^1 (\Ga, \hol_V^*) = \Pic(X) \ra $$
$$ \ra H^1 (\Lam, \hol_V^*)^G = \Pic(T)^G \ra H^2(G, \CC^*) .$$

This exact sequence is more geometrical, it  is the standard sequence saying that G-linearized line bundles on $T$ map to $G$-invariant line bundle classes,
and two linearizations differ by a character $\chi : G \ra \CC^*$.  

The sequence gives an obstruction, for a $G$-invariant line bundle class,  to admitting a $G$-linearization,
and the obstruction takes values in $H^2(G, \CC^*)$. This obstruction group  is trivial  for instance if $G$ is a cyclic group.

The group  $H^2(G, \CC^*)$ is called  the group of Schur multipliers, and the 
Schur multiplier 
 that we obtain from the last arrow  in the above sequence
is the class of the Thetagroup of $L$: if a line bundle class $L$ on $T$ is $G$-invariant,
Mumford, \cite{mumford} pages 221 and foll., defined the Thetagroup $\Theta(L)$ as the group of the isomorphisms of $L$ with $g^*(L)$, so that we have the exact sequence
$$ 1 \ra \CC^* \ra \Theta(L) \ra G \ra 1. $$
This is a central extension, hence it   is classified by an element in $H^2(G, \CC^*)$ which measures the obstruction to splitting the above
exact sequence (that is, to lifting the action of $G$ to $L$).

\begin{ex}
Consider the canonical line bundle $K_X$ on $X$. Its pull back is the canonical line bundle $K_A$, which is a trivial line bundle
 $K_A \cong \hol_A$. Both line bundles are $G$-linearized, but the corresponding linearizations are different.
 $G$ acts trivially on $H^0(A, \hol_A)$, while it acts on $H^0(A, K_A)$ through the representation $det (L(G))$.
 
 Hence {\bf the canonical line bundle $K_X$ of  a Hyperelliptic manifold   is trivial if and only if the representation $L : G \ra GL(V)$ is unimodular
 (has determinant $=1$).}
\end{ex}

From the previous discussion it is apparent that the  main group to be investigated is then $H^1 (\Lam, \hol_V^*)^G = \Pic(T)^G$.
 
We use here then the  exact sequence for the Picard group of $T$ (derived from the exponential sequence):
$$ (***) \ 0 \ra \Pic^0(T) \ra \Pic(T) \ra NS(T) \ra 0, $$
$$\ NS(T) = Ker [ H^2(T, \ZZ) \ra H^2(T, \hol_T) ].   $$
This sequence is very explicit: by the Theorem of Appell-Humbert, $NS(T) $ is the space of Hermitian forms $H$ on $V$
whose imaginary part $E$ takes integral values on $\Lam$.

Indeed, interpreting $ \Pic(T) = H^1 (\Lam, \hol_V^*)$ we get the cocycles in Appell- Humbert normal form:
$$ f_{\la} (z) =  \rho(\la) \ \exp (  \pi ( H (z, \la) + \frac{1}{2} \pi H (\la, \la))) ,$$
where $\rho$ is a semicharacter for $E$, that is, $\rho : \Lam \ra \CC^*$ 
satisfies  $$\rho (\la + \la') = \rho (\la )\rho ( \la') \exp (\pi i E(\la, \la')).$$

In this interpretation $ \Pic^0(T) = T^* := \overline{V}^{\vee} / \Hom(\Lam, \ZZ) \cong  \Hom(\Lam, \RR)  / \Hom(\Lam, \ZZ),$
and we get a character $\chi : \Lam \ra \CC^*$ by composing with $ y \mapsto \exp (2 \pi i  y)$.

We take the exact sequence of $G$-invariants associated to $(***)$:
  
  $$ (****) \ 0 \ra (T^*)^G = \Pic^0(T)^G \ra \Pic(T)^G \ra NS(T)^G  \ra H^1(G, T^*) .   $$
  The last arrow measures the obstruction for an invariant form $H$  in $NS(T)$ to come from an invariant class in $\Pic(T)$; 
  and
  the obstruction cocycle associates to an element $g \in G$ 
  $$  g^*(L) \otimes L^{-1} \in T^* = Pic^0(T),$$ for $L$  a line bundle with Chern class $H$.

Using the Appell-Humbert theorem it is easy to calculate $NS(T)^G$: these are the Hermitian forms
$H$
as above which are $G$-invariant, hence $NS(T)^G = H^2(T, \ZZ)^G \cap H^{1,1}(T)$.

As a final remark, since $(T^*) =  \overline{V}^{\vee} / \Hom(\Lam, \ZZ)$, taking $G$-invariants we obtain:

$$ \  (Pic)  \ \ 0 \ra  (\overline{V}^{\vee})^G / \Hom(\Lam, \ZZ)^G \ra (T^*)^G \ra  H^1(G, \Hom(\Lam, \ZZ)) \ra 0,$$
and 
$$  H^1(G, T^*) \cong  H^2(G, \Hom(\Lam, \ZZ)),$$
which brings us back to the first spectral sequence.

\bigskip

 \begin{lemma}
 If $X$ is a Bagnera -De Franchis Manifold, , then 
 $$ \psi :  H^1 (T, \ZZ) ^G = \Hom (\Lam , \ZZ)^G  \ra H^2( G , \ZZ)$$ is onto.
 
 Hence in particular 
 $$ (**) \  0    \ra    H^1(X, \ZZ) \ra H^1 (T, \ZZ) ^G  \ra  H^2( G , \ZZ) \ra 0 $$
 is exact.
 \end{lemma}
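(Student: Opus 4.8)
The plan is to make $\psi$ explicit and reduce its surjectivity to a statement about a single lattice element. Fix a generator $g$ of $G\cong\ZZ/m$ and a lift $\gamma\in\Gamma$ of $g$, written in an affine realization (Theorem~\ref{affinereal}) as $v\mapsto Lv+b$. Since $\gamma$ maps to the identity of $G$ after raising to the $m$-th power, $\lambda_0:=\gamma^m$ lies in $\Lambda$, and conjugation by $\gamma$ gives $L(\lambda_0)=\lambda_0$, so $\lambda_0\in\Lambda^G$; moreover $\gamma^m$ acts as the translation by $(I+L+\dots+L^{m-1})b$, so $\lambda_0=(I+L+\dots+L^{m-1})b$. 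Every element of $\Gamma$ is uniquely $\gamma^i\lambda$ with $0\le i<m$, $\lambda\in\Lambda$; using this normal form one checks that a $G$-invariant homomorphism $\phi\colon\Lambda\to\ZZ$ extends to a homomorphism $\Gamma\to\ZZ$ if and only if $m\mid\phi(\lambda_0)$ — the "only if" is $m\,\widetilde\phi(\gamma)=\widetilde\phi(\gamma^m)=\phi(\lambda_0)$, and for the "if" one puts $\widetilde\phi(\gamma):=\phi(\lambda_0)/m$ and verifies multiplicativity on the normal form. Because $H^1(G,\ZZ)=0$, the five-term exact sequence of the Grothendieck spectral sequence identifies $\ker\psi$ with the image of the restriction map $\Hom(\Gamma,\ZZ)\to\Hom(\Lambda,\ZZ)^G$; hence $\ker\psi=\{\phi\in\Hom(\Lambda,\ZZ)^G : m\mid\phi(\lambda_0)\}$. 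Since $|H^2(\ZZ/m,\ZZ)|=m$, it follows that $\psi$ is onto precisely when the homomorphism $\phi\mapsto\phi(\lambda_0)\bmod m$ from $\Hom(\Lambda,\ZZ)^G$ to $\ZZ/m$ is onto, i.e. when the values $\phi(\lambda_0)$, $\phi$ ranging over $\Hom(\Lambda,\ZZ)^G$, have greatest common divisor prime to $m$.

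I would then translate this into geometry. A functional on $\Lambda$ is $G$-invariant exactly when it kills $(L-I)\Lambda$, whose saturation in $\Lambda$ is $\Lambda_2:=\Lambda\cap V_2$ (here $L-I$ is $0$ on $V_1$ and invertible on $V_2$); thus $\Hom(\Lambda,\ZZ)^G=\Hom(\Lambda/\Lambda_2,\ZZ)$, with $\Lambda/\Lambda_2$ a lattice $\Lambda_1'$ in $V_1$ (the image of $\Lambda$ under $V\to V_1$). Writing $b=b_1+b_2$ with $b_i\in V_i$, and using that $I+L+\dots+L^{m-1}$ is $m\cdot\mathrm{id}$ on $V_1$ and $0$ on $V_2$ (its eigenvalues there being nontrivial $m$-th roots of unity), we get $\lambda_0=m\,b_1\in\Lambda_1'\subset V_1$. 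Consequently the divisibility condition above becomes: the class $\bar b_1$ of $b_1$ in the subtorus $T_1:=V_1/\Lambda_1'$ has order exactly $m$. (The automorphism $g$ descends to $T_1$ as the translation $\bar z\mapsto\bar z+\bar b_1$, and $m\bar b_1=0$ always, so only "order $m$" needs proof; from it, "$\gcd$ prime to $m$" is an elementary divisibility computation.)

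The crux is therefore to deduce from the freeness of the $G$-action that $\bar b_1$ has order $m$ in $T_1$, i.e. that $k\,b_1\notin\Lambda_1'$ for $0<k<m$. If $g^k$ acts nontrivially on $T_1$ this is immediate; if $g^k$ acts trivially on $T_1$ it preserves each fibre of $T\to T_1$, a translate of $V_2/\Lambda_2$, acting there affinely with linear part $L^k|_{V_2}$, so whenever $L^k-I$ is invertible on $V_2$ the map $g^k$ has a fixed point, contradicting freeness. Thus when $m$ is prime — more generally, when every eigenvalue of $L|_{V_2}$ is a primitive $m$-th root of unity — the assertion is clear. The remaining case, in which $L^k$ has eigenvalue $1$ on $V_2$ for some $0<k<m$, is the step I expect to require the most care: here one presumably invokes the structure of Bagnera--De Franchis manifolds with cyclic $G$ — e.g. the normal form in which $T$ is, up to isogeny, the product of the "translation factor" $T_1$ by a torus on which $G$ acts linearly with no nonzero invariants — to rule out $k\,b_1\in\Lambda_1'$. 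Once $\bar b_1$ is shown to have order $m$, the computation of the previous paragraph gives the surjectivity of $\psi$, and the stated exactness of $(**)$ is then just the five-term exact sequence combined with that surjectivity.
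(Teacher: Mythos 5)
Your reduction is correct, and it follows a genuinely different route from the paper's. You identify $\ker\psi$ with the image of the restriction map via the five-term sequence, compute $\la_0=\ga^m=(I+L+\dots+L^{m-1})b=mb_1$, and correctly translate surjectivity of $\psi$ into the single statement that the class $\bar b_1$ of $b_1$ in $T_1=V_1/\Lam_1'$ has order exactly $m$ (the intermediate identifications $\Hom(\Lam,\ZZ)^G=\Hom(\Lam_1',\ZZ)$, the extension criterion $m\mid\phi(\la_0)$, and the equivalence with a $\gcd$ prime to $m$ are all fine). The paper proceeds differently: it abelianizes $0\ra\Lam\ra\Ga\ra G\ra 1$, identifies the kernel of $\Ga^{ab}\ra G$ with the coinvariants $\Lam_G$ because $G$ is cyclic, and reduces surjectivity of $\psi$ to the isomorphism $\Tors(\Lam_G)\cong\Tors(\Ga^{ab})$, which it verifies by computing both sides through the Bagnera--De Franchis normal form and the Albanese variety (Prop.\ 16 and Prop.\ 25 of \cite{topmethods}). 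Your fixed-point argument does settle every $k$ with $L^k-I$ invertible on $V_2$, hence in particular the case $m$ prime.

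The remaining case you flag, namely some $0<k<m$ for which $1$ is an eigenvalue of $L^k|_{V_2}$, is however a genuine gap as written, and it is exactly where the lemma has content: there $g^k$ restricted to a fibre of $T\ra T_1$ need not have a fixed point, and freeness of $g^k$ only excludes $kb_1$ from the image of $\Lam$ in $V_1\oplus\ker(L^k-I)|_{V_2}$, which is strictly weaker than $kb_1\notin\Lam_1'$. Moreover the repair you sketch, a normal form valid ``up to isogeny'', cannot close it: the order of $\bar b_1$ in $V_1/\Lam_1'$ is not an isogeny invariant (it changes when $\Lam_1'$ is replaced by a commensurable lattice), and the statement to be proved is integral. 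What does close it is the precise integral normal form the paper itself invokes (\cite{topmethods}, Prop.\ 16, p.\ 309, quoted in the proof): $T=(A_1\times A_2)/\Lam^*$ with $\Lam^*$ the graph of an isomorphism $\sT_1\cong\sT_2$, $g(a_1,a_2)=(a_1+\be_1,\al_2(a_2))$, and, crucially, condition (3): $\be_1$ generates a subgroup of order $m$ meeting $\sT_1$ only in $\{0\}$. Since $\Lam_1'$ is the preimage of $\sT_1$ in $V_1$, so that $T_1=A_1/\sT_1$ and $\bar b_1$ is the image of $\be_1$, condition (3) is verbatim the assertion that $\bar b_1$ has order $m$. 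With that citation in place of your isogeny remark your proof is complete, and arguably more direct than the torsion comparison carried out in the paper.
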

 \begin{proof}
 Abelianizing the exact sequence
 $$ 0 \ra \Lam \ra \Ga \ra G \ra 1$$
 we obtain
$$ 0 \ra \Lam/ (\Lam \cap [\Ga, \Ga] )\ra \Ga^{ab}  \ra G^{ab} \ra 0.$$

In particular we have that the Kernel, $\Lam/ (\Lam \cap [\Ga, \Ga] )$, is a quotient of the
space of coinvariants $\Lam_G = \Lam/ ( [\Lam, \Ga] )$.

In the case where $G$ is cyclic, generated by the image of $\ga$, then $[\Ga, \Ga]$ equals
just $[\Lam, \Ga],$ since the brackets $[\ga^i, \ga^j]$ are trivial. Hence in this case
the Kernel $\Lam/ (\Lam \cap [\Ga, \Ga] )$ equals $\Lam_G $.

 We have (in general)  the exact sequence
$$ 0 \ra N : = \langle Im (I - L(g)) \rangle_{g \in G} \ra \Lam \ra \Lam_G \ra 0, $$ 

Hence, denoting as usual $ M^{\vee} : =  \Hom ( M, \ZZ)$, we get

$$ 0  \ra  (\Lam_G)^{\vee} \ra \Lam^{\vee} \ra N^{\vee} $$
and we have
$$ (\Lam_G)^{\vee} = (\Lam^{\vee})^G \subset  (\Lam_1)^{\vee},$$
where the first equality is by definition and the second  inclusion  holds since we have
$$ 0 \ra \Lam^{\vee} \ra (\Lam_1)^{\vee} \oplus (\Lam_2)^{\vee} \ra \Ext^1 (\Lam^*, \ZZ) \ra 0,$$
hence  $(\Lam^{\vee})^G \subset ((\Lam_1)^{\vee} \oplus (\Lam_2)^{\vee})^G = (\Lam_1)^{\vee}$.

In the BdF case, where $G$ is cyclic, starting from the exact sequence
$$ 0 \ra \Lam_G \ra \Ga^{ab}  \ra G^{ab}= G  \ra 0,$$
 we make the following
 
 {\bf CLAIM:} we have  the exact sequence
$$ (ES) \  0 \ra  H^1 (X, \ZZ) = (\Ga^{ab})^{\vee} \ra  (\Lam_G)^{\vee} =   (\Lam^{\vee})^G = H^1 (T, \ZZ) ^G \ra \Ext^1(G^{ab}, \ZZ)  \ra 0.$$

Once the above claim  is shown  we can conclude since $$\Ext^1(G^{ab}, \ZZ)  \cong G^{ab} \cong \Hom (G, \CC^*) \cong H^2( G , \ZZ).$$

To show that the above sequence (ES) is exact, we need to show the 
isomorphism
$\Ext^1(\Ga^{ab}, \ZZ) \cong \Ext^1(\Lam_G, \ZZ)$, which in turn follows if we show that we have an isomorphism 
$ Tors (\Lam_G) \cong Tors (\Ga^{ab})$.

In order to show this, 
we go back to  the description of Bagnera de Franchis manifolds,
as done in \cite{topmethods}, Proposition 16 page 309:

$ X = T /G$, with $ T = ( A_1 \times A_2)/ \Lam^*$, where 
$A_1, A_2$ are complex tori and $ \Lam^* \subset A_1 \times A_2$ is a finite subgroup,
such that 

\begin{enumerate}
\item
$ \Lam^*$ is the graph of an isomorphism between subgroups $\sT_1 \subset A_1, \sT_2 \subset A_2$,
\item
$(\al_2 - Id) \sT_2 =0$, where
\item 
$G$ is generated by $g$ such that
$$g (a_1, a_2) = (a_1 + \be_1, \al_2 (a_2)),$$
and such that the subgroup of order $m$ generated by $\be_1$ intersects $\sT_1$ only in $\{0\}$.
\item
In particular,  $ X = ( A_1 \times A_2)/ (G \times \Lam^*).$
\end{enumerate}

By property (2) it follows that $(Id - L_g) (\Lam^*) =0$, hence  $(Id - L_g) (\Lam) \subset \Lam_1 \oplus \Lam_2$
and indeed, if we define  $\Lam_2' \subset \Lam_2 \otimes \QQ$ via the property that $\Lam_2' / \Lam_2 \cong \sT_2$,
then $(Id - L_g) (\Lam) = (\al_2 - Id) \Lam_2'  \subset  \Lam_2$, since the vectors in the image
 have  first coordinate equal to zero. Then we have an exact sequence
  $$ 0 \ra  \Lam_1  \oplus [ \Lam_2 / (\al_2 - Id)(  \Lam_2' ) ] \ra \Lam_G \ra \Lam^*\ra 0 .$$

 We apply now Proposition 25, page 315 of \cite{topmethods}, stating that 
 $$Alb(X) = A_1/  ( \sT_1 \oplus \langle \be_1\rangle),$$
 hence if we write $H_1(X,\ZZ) = Tors(H_1(X,\ZZ)) \oplus H_1(X,\ZZ)_{free}$,
 $$\ H_1(X,\ZZ)_{free}/ \Lam_1 = \Ga^{ab}_{free} / \Lam_1 \cong \Lam^* \oplus (\ZZ/m).$$

 We conclude observing that  the torsion group of $\Lam_G$ contains   the finite subgroup $ [ \Lam_2 / (\al_2 - Id)(  \Lam_2' ) ] $,
 which is therefore   contained in the torsion group of $\Ga^{ab}$: the latter cannot however be larger since
 the quotient
 $$H_1(X,\ZZ) / (\Lam_1\oplus [ \Lam_2 / (\al_2 - Id)(  \Lam_2' ) ] ) \cong  \Lam^* \oplus (\ZZ/m) =  H_1(X,\ZZ)_{free}/ \Lam_1 .$$

 \end{proof}
 
  \begin{remark}
 In general the surjective map $\Lam_G  \ra \Lam/ (\Lam \cap [\Ga, \Ga] )$ is not injective:
 for instance, in the case of the Hyperelliptic threefold with $G = D_4$, it has a kernel $\cong \ZZ/2$.
 
 \end{remark}
 
  \begin{lemma}
 If $X$ is a Bagnera -De Franchis Manifold, , then 
 we have an exact sequence 
  $$ (**) \  0    \ra    H^1(G, (\Lam^{\vee})) \ra H^2 (X, \ZZ)   \ra   ker [ H^2 (T, \ZZ)^G \ra  H^2( G , (\Lam^{\vee}))]  \ra 0 .$$
  
  In particular, the torsion group of $H^2 (X, \ZZ) $ is the  group  $H^1(G, (\Lam^{\vee})),$
  which is an $m$-torsion group.

 \end{lemma}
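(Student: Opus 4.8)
The plan is to read off $H^2(X,\ZZ)$ from the Grothendieck (Cartan--Leray) spectral sequence $H^p(G,H^q(T,\ZZ))\Rightarrow H^{p+q}(X,\ZZ)$ already introduced above, and to simplify the three graded pieces of the resulting filtration on $H^2(X,\ZZ)$ by exploiting the two features special to the Bagnera--De Franchis case: $G$ is cyclic, and (by the previous Lemma) the differential $\psi$ is surjective. Recall that $H^1(T,\ZZ)=\Lam^\vee$ and $H^2(T,\ZZ)=\Lam^2(\Lam^\vee)$, and that the graded pieces of $H^2(X,\ZZ)$ are, from bottom to top,
$$H^2(G,\ZZ)/\mathrm{Im}(\psi),\qquad \ker[\varphi:H^1(G,\Lam^\vee)\ra H^3(G,\ZZ)],\qquad \ker[\,\ker[H^2(T,\ZZ)^G\ra H^2(G,\Lam^\vee)]\ra \mathrm{Coker}(\varphi)\,],$$
where the three maps out of $H^1(T,\ZZ)^G$, $H^1(G,\Lam^\vee)$ and $H^2(T,\ZZ)^G$ are the $d_2$ edge differentials and the last arrow is $d_3$.

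Next I would invoke the cyclicity of $G$: then $H^3(G,\ZZ)\cong H^2(G,\CC^*)=0$, since the Schur multiplier of a cyclic group vanishes (as already noted above; alternatively cyclic group cohomology is $2$-periodic and $H^1(G,\ZZ)=\Hom(G,\ZZ)=0$). Hence $\varphi=0$, so $\ker(\varphi)=H^1(G,\Lam^\vee)$ and $\mathrm{Coker}(\varphi)=0$; in particular $d_3$ has zero target and the top graded piece is just $\ker[H^2(T,\ZZ)^G\ra H^2(G,\Lam^\vee)]$. By the preceding Lemma $\psi$ is onto, so the bottom graded piece $H^2(G,\ZZ)/\mathrm{Im}(\psi)$ vanishes. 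Thus the filtration on $H^2(X,\ZZ)$ has only the two nonzero pieces $H^1(G,\Lam^\vee)$ (a subgroup) and $\ker[H^2(T,\ZZ)^G\ra H^2(G,\Lam^\vee)]$ (a quotient), which is precisely the asserted short exact sequence $(**)$.

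For the statement about torsion I would argue as follows. The group $H^2(T,\ZZ)=\Lam^2(\Lam^\vee)$ is free abelian of finite rank, so its subgroup of $G$-invariants is free, and hence so is the further subgroup $\ker[H^2(T,\ZZ)^G\ra H^2(G,\Lam^\vee)]$. On the other hand $H^1(G,\Lam^\vee)$ is a finitely generated abelian group annihilated by $m=|G|$ (standard for the cohomology of a finite group, or directly from the cyclic cochain complex), hence finite. In a short exact sequence $0\ra A\ra H^2(X,\ZZ)\ra C\ra 0$ with $A=H^1(G,\Lam^\vee)$ finite and $C$ torsion-free, the torsion subgroup of the middle term maps to $\Tors(C)=0$, so it lies in $A$, while $A$, being torsion, lies in it. Therefore $\Tors(H^2(X,\ZZ))=H^1(G,\Lam^\vee)$, which is $m$-torsion.

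I do not expect a deep obstacle here once the previous Lemma (surjectivity of $\psi$) is in hand: the only points requiring care are the precise identification of the $d_2$ and $d_3$ edge differentials of the spectral sequence with the maps named above, and the vanishing $H^3(G,\ZZ)=0$ for cyclic $G$ --- both bookkeeping rather than substance. Should one wish to bypass $d_3$ entirely, it suffices to observe that the filtration already exhibits $H^2(X,\ZZ)$ as an extension of a subquotient of the free group $H^2(T,\ZZ)^G$ by $H^1(G,\Lam^\vee)$, and the torsion computation uses only freeness of $H^2(T,\ZZ)$ together with the finiteness and $m$-torsion of $H^1(G,\Lam^\vee)$.
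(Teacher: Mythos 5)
Your proposal is correct and follows essentially the same route as the paper: read off the exact sequence from the Cartan--Leray spectral sequence using $H^3(G,\ZZ)\cong H^2(G,\CC^*)=0$ for cyclic $G$ (together with the surjectivity of $\psi$ from the previous Lemma, which the paper uses implicitly), and deduce the torsion statement from the torsion-freeness of the subgroup of $H^2(T,\ZZ)^G$ and the $m$-torsion of $H^1(G,\Lam^{\vee})$. The only cosmetic difference is that you invoke the general fact that $|G|$ annihilates higher group cohomology, where the paper checks the $m$-torsion of $H^1(G,\Lam^{\vee})$ by an explicit cocycle computation for cyclic $G$.
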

 
\begin{proof}
The first assertion  follows from the first spectral sequence, since $H^3(G, \ZZ) = H^2 (G, \CC^*) = 0$.

For the second assertion, we notice that the third term in the exact sequence is contained in $H^2 (T, \ZZ)$,
hence it is torsion free.

Observe moreover  that, $G$ being cyclic and generated by $g$, a  cocycle in $H^1(G, (\Lam^{\vee}))$  is fully determined
by the element $ f(g) \in \Lam^{\vee},$ actually $f(g) \in Ker ( 1 + g + g^2 \dots + g^{m-1})$. 

Whereas the coboundaries 
are the elements inside  $ Im (1-g)$. 

Hence $x : = f(g)$ is cohomologous to $ g x$, which is cohomologous
to $g^2 x$, and  proceeding in this way we infer that $ m x$ is cohomologous to zero, as we wanted to show. 

\end{proof}

We summarize our results in the following Theorem, part (1') therein is due to Andreas Demleitner.

\begin{theorem}\label{summarize}
	Let $X = T/G$ be a hyperelliptic manifold. The following statements hold:
	\begin{itemize}
		\item[(1)] The sequence of $G$-linearized line bundles on $X$ is 
		$$ 0 \ra H^1(G, \CC^*) = \Hom (G, \CC^*)  \ra H^1 (\Ga, \hol_V^*) = \Pic(X) \ra $$
		$$ \ra H^1 (\Lam, \hol_V^*)^G = \Pic(T)^G \ra H^2(G, \CC^*),$$
		and a line bundle $L \in \Pic(T)^G$ admits a linearization if and only if its class  maps to zero in $H^2(G,\CC^*)$. Moreover, two linearizations on $L$ differ by a character $\chi \colon G \to \CC^*$. 
		\item[(2)] $H^1(X,\ZZ)$ sits in an exact sequence
		$$0 \to H^1(X,\ZZ) \to H^1(T,\ZZ)^G \to Im(\psi) \to 0, $$
		$$\psi \colon H^1(T,\ZZ)^G \to H^2(X,\ZZ).$$
	\end{itemize}
	If $X = T/G$, $T = (V_1 \oplus V_2)/\Lam$ is a Bagnera-De Franchis manifold with group $G \cong \ZZ/m$, then statements (1) and (2) specialize to
	\begin{itemize}
		\item[(1')] Every line bundle $L \in \Pic(T)^G$ admits a $G$-linearization, and two linearizations on $L$ differ by an $m$-th root of unity.\\
		Moreover, if $\sL \in \Pic(X)$ pulls back to a line bundle $L \in \Pic(T)^G$ with Appell-Humbert data $(H,\rho)$, a cocycle $[f] \in H^1(\Ga, \hol_V^*)$ corresponding to $\sL$ is determined by
		$$f_\ga(z) := \rho(\la)^{1/m} \exp(\frac{\pi}{m} H(z,\la) + \frac{\pi}{2m^2} H(\la,\la)), $$
		where
		\begin{itemize}
			\item[\textbullet] $\ga$ is a lift of a generator $g$ of $G$, which acts on $V_1 \oplus V_2$ as $(z_1,z_2) \mapsto (z_1 + b_1, \al z_2 + b_2),$
			\item[\textbullet] $\la := \ga^m \in \Lam$, 
			\item[\textbullet] $\rho(\la)^{1/m}$ is an  $m$-th root of $\rho(\la)$ in $\CC$.
		\end{itemize}
		\item[(2')] The map $\psi$ is onto, in particular, we have  exact sequences
		$$0 \to H^1(X,\ZZ) \to H^1(T,\ZZ)^G \to H^2(G,\ZZ) \to 0,$$
		 $$ (**) \  0    \ra    H^1(G, (\Lam^{\vee})) \ra H^2 (X, \ZZ)   \ra   ker [ H^2 (T, \ZZ)^G \ra  H^2( G , (\Lam^{\vee}))]  \ra 0 ,$$
		 with $H^1(G, (\Lam^{\vee}))= Tors (H^2 (X, \ZZ)   )$.
		 \item[(3')] The first Chern class map $c_1$ applied to the exact sequence 
		 $$ 0 \ra H^1(G, \CC^*)   \ra \Pic(X) \ra  \Pic(T)^G \ra H^2(G, \CC^*) $$
		 sends $H^1(G, \CC^*)$ to $Pic^0(X)$, and sends $(T^*)^G = \Pic^0(T)^G $ onto $H^1(G, \Lam^{\vee})$.

	\end{itemize}
\end{theorem}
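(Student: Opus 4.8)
The statement collects the conclusions of the spectral-sequence analysis carried out above, so the plan is mostly to assemble what is already there. Parts~(1) and~(2) are the five-term exact sequences of the Grothendieck spectral sequence $H^p(G,H^q(T,\sF))\Rightarrow H^{p+q}(X,f_*\sF^G)$ applied to $\sF=\hol_T^*$ and to $\sF=\ZZ$ respectively, using $H^0(T,\hol_T^*)=\CC^*$ with trivial $G$-action, $H^1(G,\ZZ)=\Hom(G,\ZZ)=0$, and the identification $H^*(X,-)=H^*(\Ga,-)$, which holds because $X$ is a $K(\Ga,1)$ with Stein universal cover. The description of~(1) via $G$-linearized line bundles --- two linearizations of a fixed $L\in\Pic(T)^G$ differing by a character $G\ra\CC^*$, and the obstruction to linearizability being the class of the Theta-group $\Theta(L)\in H^2(G,\CC^*)$ --- is the Mumford-style argument already recalled. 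When $G\cong\ZZ/m$ is cyclic, $H^2(G,\CC^*)=0$ (vanishing Schur multiplier), and this at once yields the first half of~(1'): every $L\in\Pic(T)^G$ is linearizable, with the set of linearizations a torsor under $\Hom(\ZZ/m,\CC^*)=\mu_m$.

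For the explicit cocycle in~(1') I would use the presentation of $\Ga$ by $\Lam$ together with a lift $\ga$ of the generator $g$, with relations $\ga\mu\ga^{-1}=L(g)\mu$ ($\mu\in\Lam$) and $\ga^m=\la\in\Lam$, where $\ga$ acts affinely on $V=V_1\oplus V_2$ by $z\mapsto L(g)z+u_\ga$ with $u_\ga=(b_1,b_2)$, so that $\la=\sum_{j=0}^{m-1}L(g)^ju_\ga=(mb_1,0)\in\Lam_1$. A cocycle $[f]\in H^1(\Ga,\hol_V^*)$ is then determined by the Appell--Humbert values $f_\mu$ ($\mu\in\Lam$) together with $f_\ga$, subject to just two relations. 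The commutation relation $f_{\ga\mu\ga^{-1}}=f_{L(g)\mu}$ holds for the proposed $f_\ga$ precisely because the Appell--Humbert datum $(H,\rho)$ is $G$-invariant. The power relation $f_{\ga^m}=f_\la$ is checked by telescoping: $f_{\ga^m}(z)=\prod_{j=0}^{m-1}f_\ga(\ga^j\cdot z)$, the $m$-th power of $\rho(\la)^{1/m}$ is $\rho(\la)$, and, substituting $\ga^j\cdot z=L(g)^jz+(jb_1,w_j)$ and using that $H$ is $G$-invariant (hence $V_1\perp V_2$ for $H$) together with $H(b_1,mb_1)=\tfrac1mH(\la,\la)$, the $H$-linear terms sum to $\pi H(z,\la)+\tfrac{\pi(m-1)}{2m}H(\la,\la)$, which combines with the $m$ copies of $\tfrac{\pi}{2m^2}H(\la,\la)$ to give exactly the Appell--Humbert exponent $\pi H(z,\la)+\tfrac\pi2 H(\la,\la)$. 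The $m$ choices of $\rho(\la)^{1/m}$ correspond to the $\mu_m$ of linearizations. This is the most laborious step, but purely routine.

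Part~(2') is put together from the two Lemmas above. The first Lemma gives surjectivity of $\psi$ for a Bagnera--De Franchis manifold --- by abelianizing $0\ra\Lam\ra\Ga\ra G\ra1$, identifying the kernel of $\Ga^{ab}\ra G$ with the coinvariants $\Lam_G$ for cyclic $G$, proving $\Tors(\Lam_G)\cong\Tors(\Ga^{ab})$ from the explicit Bagnera--De Franchis presentation and the formula for $\Alb(X)$ of \cite{topmethods}, and invoking $\Ext^1(G^{ab},\ZZ)\cong G^{ab}\cong H^2(G,\ZZ)$ --- so that $Im(\psi)=H^2(G,\ZZ)$ and the first sequence of~(2') follows from~(2). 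The second sequence of~(2') is the content of the second Lemma: since $G$ is cyclic, $H^3(G,\ZZ)=H^2(G,\CC^*)=0$, so the relevant edge terms of the first spectral sequence collapse; the third term embeds in $H^2(T,\ZZ)$, hence is torsion-free, giving $H^1(G,\Lam^\vee)=\Tors(H^2(X,\ZZ))$, and $H^1(G,\Lam^\vee)$ is $m$-torsion because for $G=\langle g\rangle$ a $1$-cocycle is pinned down by $f(g)$ and $m\,f(g)$ is a coboundary.

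Finally,~(3'). For the first clause, the bundle $\sL_\chi$ attached to $\chi\in H^1(G,\CC^*)=\Hom(G,\CC^*)$ is the inflation of $\chi$ along $\Ga\ra G$, i.e. a constant $\CC^*$-valued cocycle; by naturality of the connecting map of $0\ra\ZZ\ra\CC\ra\CC^*\ra0$ under inflation, $c_1(\sL_\chi)$ is the image of $\beta(\chi)$ under the inflation map $H^2(G,\ZZ)\ra H^2(\Ga,\ZZ)=H^2(X,\ZZ)$, where $\beta$ is the isomorphism $H^1(G,\CC^*)\cong H^2(G,\ZZ)$; but the kernel of that inflation map is $Im(\psi)$, which by~(2') is all of $H^2(G,\ZZ)$, so $c_1(\sL_\chi)=0$ and $H^1(G,\CC^*)\subseteq\Pic^0(X)$. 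For the second clause, by~(2') the kernel of $f^*\colon H^2(X,\ZZ)\ra H^2(T,\ZZ)$ is $H^1(G,\Lam^\vee)$; given $L\in\Pic^0(T)^G=(T^*)^G$, pick a lift $\sL\in\Pic(X)$ (available by~(1')); since $f^*\sL=L$ has $c_1(L)=0$, we get $c_1(\sL)\in H^1(G,\Lam^\vee)$, which is independent of the lift by the first clause. That $L\mapsto c_1(\sL)$ is onto $H^1(G,\Lam^\vee)$ I would obtain by identifying it, up to sign, with the connecting homomorphism $\delta\colon(T^*)^G\ra H^1(G,\Lam^\vee)$ of $0\ra\Lam^\vee\ra\overline{V}^\vee\ra T^*\ra0$ --- which is surjective by the sequence $(Pic)$ --- via the cocycle calculation: write $L$ as a unitary $G$-invariant $\chi\colon\Lam\ra U(1)$, lift it to $y\in\Hom(\Lam,\RR)=\overline{V}^\vee$, use the cocycle of~(1') (with $H=0$, $\rho=\chi$) to present $\sL$ by a constant $\CC^*$-valued cocycle on $\Ga$, lift that to an $\RR$-valued cochain by means of $y$, and observe that the resulting integral $2$-cocycle evaluated on a pair $(\mu,\ga)$ with $\mu\in\Lam$ equals $(y-g\cdot y)(\mu)$; thus it represents the class of the $1$-cocycle $g\mapsto y-g\cdot y$, which is $\pm\delta(L)$, and surjectivity of $\delta$ finishes the proof. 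The only genuinely substantial ingredient in all of this is the first Lemma, already available; the points that still require care are the telescoping identity in~(1') and the comparison $c_1(\sL)=\pm\delta(L)$ in~(3'), both explicit but unenlightening computations.
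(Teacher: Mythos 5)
Your proposal follows the paper's own route: parts (1), (2), (2') are assembled from the two Grothendieck spectral sequences and the two lemmas exactly as in the text, the explicit cocycle in (1') is verified by the same telescoping computation using the $G$-invariance of $H$ and the orthogonality $H(V_1,V_2)=0$, and (3') rests, as in the paper, on the surjectivity of $\psi$ (so that $H^2(G,\ZZ)$ maps to zero in $H^2(X,\ZZ)$) together with the surjection $(T^*)^G \ra H^1(G,\Lam^{\vee})$ coming from the sequence (Pic). The only difference is that you spell out two points the paper leaves implicit --- compatibility of $f_\ga$ with the relation $\ga\mu\ga^{-1}=L(g)\mu$, and the identification of $L\mapsto c_1(\sL)$ with the connecting homomorphism of $0\ra\Lam^{\vee}\ra\overline{V}^{\vee}\ra T^*\ra 0$ --- which are correct refinements of the same argument rather than a different approach.
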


\begin{proof}
	The last part of assertion (1') is not proven yet. If $X = T/G$ is a Bagnera-De Franchis manifold and $\sL$ is a line bundle on $X$, we aim to give an element $[f] \in H^1(\Ga, \hol_V^*)$ corresponding to $\sL$. If $\la' \in \Lam \subset \Ga$, we can choose $f_{\la'}(z)$ to be in Appell-Humbert normal form,
	\begin{align*}
	f_{\la'}(z) = \rho(\la')  \ \exp(\pi H(z,\la') + \frac{\pi}{2} H(\la', \la')),
	\end{align*}
	as already noted 
	. Since $G$ is cyclic, it remains to determine an element $f_\ga(z)$, which satisfies the cocycle condition
	$$f_\la(z) = f_\ga(\ga^{m-1}z) \cdot ... \cdot f_\ga(z), \;\;\;\; \la := \ga^m = mb_1 \in \Lam \cap V_1.$$
	
	It remains to check that the definition of $f_\ga$ in the statement of the Theorem satisfies this condition. We calculate
	
	$$f_\ga(\ga^{m-1}z) \cdot ... \cdot f_\ga(z) = \rho(\la) \ \exp(\frac{\pi}{m} H((\ga^{m-1} + ... + \ga + Id)z, \la) + \frac{\pi}{2m} H(\la, \la)).$$
	
	Writing $z = (z_1,z_2)$, $z_j \in V_j$, we obtain that 
	
	$$ (\ga^{m-1} + ... + \ga + Id)z = (mz_1 + \frac{m(m-1)}{2} b_1, b_2'), \text{ for some } b_2' \in V_2. $$
	We note that, since $H$ is $G$-invariant, we obtain that $H(w_2,w_1) = 0$ for any $w_j \in V_j$. This implies, together with $\la = mb_1 \in V_1$, that
	
	$$\frac{\pi}{m} H((\ga^{m-1} + ... + \ga + Id)z, \la) = \pi H(z,\la) + \frac{\pi (m-1)}{2} H(b_1, \la),$$ 
	
	and finally the desired
	
	$$ f_\ga(\ga^{m-1}z) \cdot ... \cdot f_\ga(z) = f_\la(z).$$
	
	For the other yet unproven assertion (3'), we use the exact sequence (Pic) stating that we have a surjection 
	$(T^*)^G \ra H^1(G, \Lam^{\vee})$. In particular, $H^1(G, \CC^*)$ maps to zero in $(T^*)^G$, hence has trivial integral 
	Chern class. Indeed $H^1(G, \CC^*)$ maps to $H^2(G, \ZZ)$ which maps to zero inside $H^2(X, \ZZ)$.
	
\end{proof}

\bigskip

\section{Tangent bundles of Bagnera de Franchis manifolds and counterexamples to  the Severi conjecture}

\begin{theorem}\label{bdf}
The tangent bundle of a Bagnera de Franchis manifold $X = T/G$ is topologically trivial, in particular all its integral
Chern classes $c_i(X) =0 \in H^*(X,\ZZ)$.
\end{theorem}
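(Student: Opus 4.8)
The plan is to exploit that $G$, being cyclic, is Abelian: this forces the tangent bundle $\Theta_X$ to split as a direct sum of line bundles, each pulled back from a character of $G$, after which Theorem~\ref{summarize} supplies the triviality of their Chern classes.

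First I would make the $G$-equivariant structure of $\Theta_T$ explicit. Writing $T = V/\Lam$, the tangent bundle is canonically $\Theta_T \cong \hol_T \otimes_\CC V$ (translation-invariant vector fields being identified with $V$); since $f\colon T \to X$ is \'etale one has $f^*\Theta_X \cong \Theta_T$, with descent datum the $G$-linearization induced by the action of $G$ on $T$. As each $g \in G$ acts on $T$ by an affine transformation $z \mapsto L(g)z + b_g$ whose differential is the constant map $L(g) \in GL(V)$, the resulting $G$-action on $\Theta_T = \hol_T \otimes_\CC V$ is the tensor product of the pullback action on $\hol_T$ with the linear action $L(g)$ on the fiber $V$. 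Decomposing $V = \bigoplus_\chi V_\chi$ into the character eigenspaces $V_\chi = \{v : L(g)v = \chi(g)v \ \forall g \in G\}$, over $\chi \in \Hom(G,\CC^*)$ --- possible precisely because $G$ is Abelian --- I obtain a $G$-equivariant splitting, whence
$$ \Theta_X = (f_*\Theta_T)^G = \bigoplus_\chi \bigl(\hol_T \otimes_\CC V_\chi\bigr)^G = \bigoplus_\chi L_\chi^{\oplus \dim V_\chi}, $$
where $L_\chi \in \Pic(X)$ denotes the line bundle whose pullback to $T$ is trivial, endowed with the $G$-linearization determined by $\chi$; by Theorem~\ref{summarize}(1), $L_\chi$ lies in the subgroup $H^1(G,\CC^*) = \Hom(G,\CC^*) \subset \Pic(X)$.

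Next I would conclude that each $L_\chi$ is topologically trivial. On a compact K\"ahler manifold a holomorphic line bundle is topologically trivial exactly when its integral first Chern class vanishes, so it suffices to check $c_{1,\ZZ}(L_\chi) = 0 \in H^2(X,\ZZ)$; this is the content of Theorem~\ref{summarize}(3'), which says that the first Chern class map annihilates $H^1(G,\CC^*)$ --- the reason being that $c_{1,\ZZ}$ restricted there factors through the edge homomorphism $H^2(G,\ZZ) \to H^2(X,\ZZ)$ of the Grothendieck spectral sequence $H^p(G,H^q(T,\ZZ)) \Rightarrow H^{p+q}(X,\ZZ)$, and this edge map vanishes once one knows $\psi\colon H^1(T,\ZZ)^G \to H^2(X,\ZZ)$ is onto (Theorem~\ref{summarize}(2')). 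Since every summand $L_\chi$ is topologically trivial, so is the finite direct sum $\Theta_X = \bigoplus_\chi L_\chi^{\oplus \dim V_\chi}$; hence $c(X) = c(\Theta_X) = 1$ by the Whitney formula, i.e. $c_i(X) = 0 \in H^{2i}(X,\ZZ)$ for all $i \ge 1$.

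In this argument the only steps requiring genuine care are the $G$-equivariant splitting of $\Theta_T$ and the observation that its summands really pull back to the \emph{trivial} bundle on $T$ --- both immediate, the latter because $\Theta_T$ itself is trivial. The substantive input, the vanishing of $c_{1,\ZZ}(L_\chi)$, has been packaged into Theorem~\ref{summarize} and ultimately rests on the surjectivity of $\psi$, proved earlier from the explicit Bagnera-de Franchis description $T = (A_1 \times A_2)/\Lam^*$ and the identification $\Tors(\Lam_G) \cong \Tors(\Ga^{ab})$; this is exactly where the hypothesis that $G$ be cyclic enters (for a non-cyclic Abelian, let alone non-Abelian, $G$ a class coming from a character need not die in $H^2(X,\ZZ)$). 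So in the present proof there is no remaining obstacle beyond correctly invoking those earlier results.
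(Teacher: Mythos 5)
Your proposal is correct and follows essentially the same route as the paper: decompose $V$ into character eigenspaces so that $\Theta_X$ splits as a direct sum of line bundles lying in $H^1(G,\CC^*)\subset\Pic(X)$, then kill their integral first Chern classes via Theorem \ref{summarize}(3') (which rests on the surjectivity of $\psi$), and use that a line bundle is topologically trivial iff $c_{1,\ZZ}=0$. You merely spell out in more detail the $G$-equivariant splitting of the trivial bundle $\Theta_T$ and the descent step, which the paper leaves implicit.
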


\begin{proof}

 $G$ is cyclic: more generally, if $G$ is Abelian, setting  $T = V / \Lam$, the vector space $V$ splits as a direct sum of character spaces 
$$V = \oplus_{\chi \in G^*} V_{\chi},$$
 
and $\Theta_X$ is a direct sum of line bundles $L_1, \dots, L_n$ corresponding to some character $\chi_i  \in G^* = H^1(G, \CC^*)$.

It suffices then to show that these line bundles are topologically trivial, which follows
since we saw in (3') of Theorem \ref{summarize} that $c_{1, \ZZ}(L_i)=0$, and for a line bundle topological triviality is equivalent to
triviality of its integral Chern class. 

\end{proof}

The previous theorem gives a counterexample of the topological version of Severi's conjecture, but we 
can give a stronger counterexample, where all Chern classes are zero in the Chow ring
and not only in the cohomology ring.

\begin{theorem}
There are projective Bagnera de Franchis manifolds $X = T/G$, which are not complex tori,
such that all their 
Chern classes $c_i(X) $ are zero in the Chow group.
\end{theorem}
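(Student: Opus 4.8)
The plan is to realize such manifolds as iso-trivial fibrations over an elliptic curve. Fix an elliptic curve $E$, a point $\tau \in E$ of exact order $m$, a positive-dimensional abelian variety $A$, and an automorphism $\sigma \in \Aut(A)$ of order $m$ with $\sigma \neq \mathrm{id}$ and with unimodular differential $\det(d\sigma_0) = 1$ (the simplest instance being $m = 2$, $A$ an abelian surface, $\sigma = -\mathrm{id}_A$, which already yields threefolds). Let $G = \ZZ/m = \langle g \rangle$ act on $T := E \times A$ by $g\cdot(e,a) = (e + \tau,\, \sigma(a))$. Translation by $\tau$ is fixed-point free, so the action is free and contains no translations (as $\sigma \neq \mathrm{id}$), whence $X := T/G$ is a Bagnera--De Franchis manifold; it is projective, since a $G$-invariant ample line bundle on $T$ admits a $G$-linearization — the obstruction in $H^2(G,\CC^*)$ vanishing because $G$ is cyclic (see the previous section) — and hence descends to $X$; and $X$ is not a complex torus, because a lift of $g$ conjugates the sublattice $\pi_1(A) \subset \pi_1(T) = \Lam$ through $\sigma \neq \mathrm{id}$, so $\pi_1(X)$ is non-abelian.

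First I would record the geometry. The first projection $E \times A \to E$ is equivariant for the translation action of $G$ on $E$, so it descends to a holomorphic fibre bundle $\pi \colon X \to E'' := E/\langle \tau \rangle$ with fibre $A$ over the elliptic curve $E''$; moreover a short check on points shows that the covering $f \colon T \to X$ is canonically identified with the pullback $X \times_{E''} E$ of the étale cyclic $m$-cover $q \colon E \to E''$. In particular the classifying surjection $\pi_1(X) \twoheadrightarrow G$ factors through $\pi_1(E'')$. Next, by the proof of Theorem~\ref{bdf} the tangent bundle splits as $\Theta_X = \bigoplus_{\chi} L_\chi^{\oplus m_\chi}$, where $\chi$ runs over the characters of $G$ occurring on $V = T_0 T = T_0 E \oplus T_0 A$, $m_\chi$ is the multiplicity of $\chi$, and $L_\chi$ is the flat line bundle on $X$ with monodromy $\pi_1(X) \twoheadrightarrow G \xrightarrow{\ \chi\ } \CC^*$. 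Since this monodromy factors through $\pi_1(E'')$, each $L_\chi$ is a pullback $\pi^* M_\chi$ of a line bundle $M_\chi$ on the curve $E''$.

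The remaining computation is then formal:
$$ c(\Theta_X) \;=\; \prod_{\chi} \bigl(1 + \pi^* c_1(M_\chi)\bigr)^{m_\chi} \;=\; \pi^*\!\left( \prod_{\chi} \bigl(1 + c_1(M_\chi)\bigr)^{m_\chi} \right) \;\in\; \pi^*\sA^*(E''). $$
Since $E''$ is a curve, $\sA^{\geq 2}(E'') = 0$, so the product inside the parentheses collapses to its part of degree $\leq 1$, namely $1 + \sum_{\chi} m_\chi c_1(M_\chi)$; applying $\pi^*$ gives $c(\Theta_X) = 1 + \sum_{\chi} m_\chi\, \pi^* c_1(M_\chi) = 1 + c_1(\Theta_X) = 1 - c_1(K_X)$ in $\sA^*(X)$. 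Finally the linear part $L \colon G \to GL(V)$ is unimodular — it is the identity on $T_0 E$ and $d\sigma_0$, of determinant $1$, on $T_0 A$ — so $K_X$ is trivial in $\Pic(X)$ by the computation of the canonical bundle of a hyperelliptic manifold recalled in the previous section. Hence $c(\Theta_X) = 1$, that is, $c_i(X) = 0$ in the Chow ring $\sA^*(X)$ for every $i \geq 1$, while $X$ is a projective Bagnera--De Franchis manifold which is not a complex torus.

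The one idea beyond Theorem~\ref{bdf} is the observation that every flat summand $L_\chi$ of $\Theta_X$ is pulled back from the base curve $E''$ of the natural iso-trivial fibration; granting this, the rest is bookkeeping — Whitney's formula, the vanishing $\sA^{\geq 2}(E'') = 0$, and unimodularity of $L$. The step that needs a little care is the identification $T \cong X \times_{E''} E$ (equivalently, that $\pi_1(X) \twoheadrightarrow G$ factors through $\pi_1(E'')$); this is the only genuine computation, and it is short and explicit given the product presentation of $T$ and of the action of $G$.
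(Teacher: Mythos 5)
Your proof is correct, and your family of examples contains the paper's: the paper takes exactly $E=E_1$, $\tau=\eta_1$, $A=E_2\times E_3$, $\sigma=-\mathrm{id}$, $m=2$. The shared mechanism is that the character line-bundle summands of $\Theta_X$ (from the proof of Theorem \ref{bdf}) are pulled back from an elliptic curve, so that all products of degree $\geq 2$ vanish in the Chow ring; but you reach this by a different route. The paper argues via its Picard-group machinery: by Theorem \ref{summarize}(3') the image of $H^1(G,\CC^*)$ in $\Pic(X)$ has trivial integral Chern class, hence lies in $\Pic^0(X)$ and is pulled back along the Albanese map (whose target here is an elliptic curve), and it then kills the degree-one term using torsion, $2c_1(L)=0$, rather than triviality of $K_X$. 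You instead exhibit the explicit isotrivial fibration $\pi\colon X\to E''=E/\langle\tau\rangle$, identify $T\cong X\times_{E''}E$ (this needs, and you have, that $\tau$ has exact order $m$), deduce that the classifying map $\pi_1(X)\twoheadrightarrow G$ factors through $\pi_1(E'')$ so each flat summand $L_\chi$ is $\pi^*M_\chi$, and then dispose of the linear term by unimodularity of the linear part, i.e.\ $K_X\cong\hol_X$. In the paper's example your $E''$ is precisely $\Alb(X)$ and $\pi$ the Albanese map, so the two pullback statements coincide; what your version buys is independence from Section 2's analysis (only the tangent-splitting and, for projectivity, the vanishing of the Schur multiplier of a cyclic group are used), plus a larger supply of counterexamples (all cyclic orders $m$, all dimensions $\geq 3$), at the price of the extra hypothesis $\det(d\sigma_0)=1$ — which is genuinely needed for your degree-one argument, whereas the paper's torsion argument sidesteps it (both hold in its example). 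What the paper's version buys is brevity for a single example and a demonstration of Theorem \ref{summarize}(3') in action.
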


\begin{proof}
Consider a product of three elliptic curves, $ T = E_1 \times E_2 \times E_3$ and the affine action of
$G = \ZZ/2$ on $T$ such that
$$g (a_1, a_2,a_3) = (a_1 + \eta_1 , - a_2, -a_3),$$
where $\eta_1$ is a nontrivial point of order 2.

Then $\Theta_X \cong \hol_X \oplus L \oplus L$, where $L$ is the nontrivial bundle of 2-torsion
corresponding to the unique embedding $ G \ra \CC^*$.

Since as we saw $ H^1(G, \CC^*)$ maps to zero in $H^2(X, \ZZ)$, $L$ is an element of 
$Pic^0(X) \cong E_1 / \langle \eta_1\rangle$, hence $L$ pulls back from the elliptic curve $Alb(X)$,
$ L = \pi^* (\sL)$.

Then, since $ 2 c_1(L) = 0$,
 $$ c( \Theta_X) = (1 + c_1(L))^2 =   1 + \pi^* (  c_1(\sL)^2) =1$$
 in the Chow ring of $X$, since $c_1 (\sL)^2 = 0$ in the Chow wing of $Alb(X)$.

\end{proof}

We shall briefly  recall in the last section
that any compact K\"ahler manifold $X$ with $c_i(X) =0 \in H^*(X,\QQ), \ \forall i,$
is a Hyperelliptic manifold. However

\begin{theorem}
There are Hyperelliptic manifolds  $X = T/G$
such that not all their integral 
Chern classes $c_i(X) \in H^*(X,\ZZ)$ are equal to zero.
\end{theorem}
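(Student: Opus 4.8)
The plan is to exhibit an explicit Hyperelliptic manifold $X = T/G$ with $G$ non-cyclic for which the torsion subgroup of $H^2(X,\ZZ)$ is nonzero and \emph{captures} a Chern class. The natural candidate is the Hyperelliptic threefold with $G = D_4$ (the dihedral group of order $8$) classified in \cite{cd} and \cite{U-Y}, since by Proposition \ref{hyp} it already displays the pathology $V^G = 0$, and by the Remark following the first Lemma the map $\Lam_G \to \Lam/(\Lam \cap [\Ga,\Ga])$ has a kernel $\cong \ZZ/2$, signalling that the cohomology of such $X$ genuinely differs from the cyclic case. First I would set up the splitting $V = V_1 \oplus V_2$ with $V_1 = V^G = 0$, so that $\Theta_X$ is no longer a sum of line bundles and the analysis of $c_2(X)$ must be done directly. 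The obstruction group $H^2(G,\CC^*)$ (Schur multipliers) of $D_4$ is $\ZZ/2$, which is precisely the extra room unavailable in the Bagnera-De Franchis setting; this is why linearizations and hence the earlier vanishing argument break down.

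Next I would compute $H^2(X,\ZZ)$ and its torsion via the first Grothendieck spectral sequence $H^p(G, H^q(T,\ZZ)) \Rightarrow H^{p+q}(X, \ZZ)$ exactly as in the excerpt, but now \emph{without} the simplifications valid for $G$ cyclic: the graded pieces of the filtration on $H^2(X,\ZZ)$ involve $H^2(G,\ZZ) / \mathrm{Im}(\psi)$ and $\mathrm{Ker}[\varphi \colon H^1(G,\Lam^\vee) \to H^3(G,\ZZ)]$, and for $D_4$ the group $H^3(G,\ZZ) \cong H^2(G,\CC^*) = \ZZ/2$ is nontrivial, so $\psi$ need not be onto and $\varphi$ need not be zero. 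I would determine the $D_4$-module structure of $\Lam$ from the explicit lattice in \cite{cd}, compute the relevant group cohomology (these are small finite computations with the standard resolution for $D_4$, or one can read them off from published tables), and identify a nonzero torsion class $\tau \in H^2(X,\ZZ)$.

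Then comes the crux: showing that some integral Chern class is nonzero. The cleanest route is to argue that $c_1(X) = c_1(\det \Theta_X) \in H^2(X,\ZZ)$ is the torsion class detected above. By the Example in Section 3, $K_X$ is trivial if and only if the linear part $L \colon G \to GL(V)$ is unimodular; for the $D_4$ action on the threefold one checks $\det L(g) = -1$ for the appropriate generator, so $K_X$ (equivalently $-c_1(X)$) is a nontrivial $2$-torsion element of $\Pic(X)$. It then remains to verify that this class is \emph{not} killed in $H^2(X,\ZZ)$, i.e. that it lies in the torsion subgroup $\mathrm{Tors}\, H^2(X,\ZZ)$ rather than mapping to zero under $\Pic(X) \to H^2(X,\ZZ)$ — equivalently, that $K_X \notin \Pic^0(X)$, which is automatic once one knows $K_X^{\otimes 2} = \hol_X$ but $K_X \neq \hol_X$ and that the reduction of $c_1$ through the Néron-Severi lattice sees it, using the computation of $NS(X)^G$ and the obstruction map $NS(T)^G \to H^1(G,T^*) \cong H^2(G,\Lam^\vee)$ from sequence $(****)$. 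Tracing the canonical class through this obstruction and landing on the nonzero element of $H^1(G,\Lam^\vee) = \mathrm{Tors}\, H^2(X,\ZZ)$ completes the argument.

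The main obstacle I anticipate is the bookkeeping in the non-abelian group cohomology: unlike the cyclic case, where a $1$-cocycle is a single element and the Schur multiplier vanishes, for $D_4$ one must genuinely compute $H^1(D_4, \Lam^\vee)$, $H^2(D_4,\ZZ)$ and the differentials $\psi, \varphi$ in the spectral sequence, and then match the canonical class against a specific generator — there is real risk of the relevant class turning out to be divisible, hence zero in $H^2(X,\ZZ)$, in which case one would instead need to look at $c_2(X)$ or pass to a higher-dimensional product example $X' = X \times E$ or a $G = D_4$ quotient in dimension $> 3$ to force a surviving torsion Chern class. Checking which Chern class actually survives, and in which example, is the delicate point; everything else is the by-now-standard spectral sequence machinery set up in Section 3.
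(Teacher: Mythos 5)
There is a genuine gap, and in fact the central computational claim of your ``cleanest route'' is false. For the hyperelliptic threefold with $G=D_4$ the linear representation must be $V=\chi\oplus W$ with $W$ the unique faithful $2$-dimensional irreducible representation and $\chi$ a character; since $V^G=0$, $\chi$ is nontrivial. Freeness of the action forces every nontrivial $g$ to have eigenvalue $1$ in $L(g)$; applied to the order-$4$ element $r$, whose eigenvalues on $W$ are $i,-i$, this gives $\chi(r)=1$, hence $\chi$ is the unique nontrivial character with $\chi(r)=1$, which is exactly $\det W$ (as $\det W(r)=1$, $\det W(s)=-1$). Therefore $\det L=\chi\cdot\det W$ is trivial, so by the Example in Section 2 the canonical bundle $K_X$ is trivial and $c_1(X)=0$: your assertion that ``one checks $\det L(g)=-1$ for the appropriate generator'' cannot hold, and the whole strategy of tracing the canonical class to a torsion generator of $H^2(X,\ZZ)$ collapses for this example. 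You partly anticipated this risk, but the proposed fallbacks do not rescue the argument: computing $c_2(X)\in H^4(X,\ZZ)$ for the $D_4$ threefold is left entirely open, and passing to a product $X'=X\times E$ cannot help, since $c(X')=pr_1^*c(X)$, so no new nonvanishing class is created. As it stands, no counterexample is actually exhibited, and a nonzero torsion class in $H^2(X,\ZZ)$ (which you do locate plausibly via the spectral sequence) does not by itself prove that any \emph{Chern} class is nonzero.

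For comparison, the paper's proof avoids non-abelian group cohomology altogether: it takes $T=E_1\times E_2\times A_3$ with $G=\ZZ/2\oplus\ZZ/2$ acting by explicit sign changes and translations of $2$-torsion points so that the action is free and no factor carries the trivial character. Then $H^1(T,\ZZ)^G=0$, so $\Pic^0(X)=0$ and $\psi=0$; since $G$ is abelian, $\Theta_X\cong L_1\oplus L_2\oplus L_3\oplus L_3$ splits into $2$-torsion line bundles attached to the three nontrivial characters, and their first Chern classes are precisely the three nonzero elements of $H^2(G,\ZZ)\subset H^2(X,\ZZ)$, giving $c_1(\Theta_X)=c_1(L_3)\neq 0$. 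If you want to salvage your plan, the lesson is that the mechanism producing nonzero integral Chern classes here is not the Schur multiplier or the failure of linearizations, but the vanishing of $\Pic^0(X)$ combined with a non-unimodular (indeed, non-cyclic abelian) character decomposition of the tangent bundle; the $D_4$ threefold fails exactly this test because its linear part is unimodular.
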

\begin{proof}
Consider the product of two elliptic curves and an Abelian surface, $ T = E_1 \times E_2 \times A_3$ and the affine action of
$G = \ZZ/2 \oplus \ZZ/2$ such that
$$g_{12} (a_1, a_2,a_3) = (-a_1 + \eta_1 , - a_2, a_3 + \eta_3),$$
$$g_{13} (a_1, a_2,a_3) = (-a_1  , a_2 + \eta_2, - a_3 + \eta_3),$$
$$g_{23} (a_1, a_2,a_3) = (a_1 + \eta_1 , - a_2 + \eta_2, - a_3 ),$$
where $\eta_1, \eta_2, \eta_3$ are  respective nontrivial torsion points of order 2.

Then $\Theta_X \cong L_1 \oplus L_2 \oplus L_3 \oplus L_3$, where the $L_i$ are nontrivial bundles of 2-torsion,
corresponding to the three nontrivial characters  $ G \ra \CC^*$.

In this case $H^1(T, \ZZ)^G =0$, hence $Pic^0(X)$ is trivial. Hence the three Chern classes
$c_1(L_i)$ are the three nontrivial elements of $ H^2(G, \ZZ) \subset H^2(X, \ZZ)$ (as $\psi =0$).

Hence $$ c( \Theta_X) = (1 + c_1(L_1)) (1 + c_1(L_2)) (1 + c_1(L_3))^2,$$
and $c_1(\Theta_X) = c_1(L_3) \neq 0 \in H^2(X, \ZZ)$.

\end{proof}

\bigskip

 The above theorem raises the interesting question
 
 \begin{question}
(1)  Is there an easy classification of  the Hyperelliptic manifolds  $X = T/G$
whose  integral 
Chern classes $c_i(X) \in H^*(X,\ZZ)$ are all equal to zero? 

(2) In particular,  of all compact K\"ahler manifolds whose tangent bundle
is topologically trivial ?

 \end{question}

\section{Recalling the Chern classes characterization of Hyperelliptic Manifolds}

A complex torus $T$ has holomorphically trivial tangent bundle and conversely a compact K\"ahler manifold $X$ with
holomorphically  trivial tangent (or cotangent) bundle
is a complex torus.

Because, if  $\Omega^1_X \cong \hol_X^n$, then $H^0(\Omega^1_X ) \cong \CC^n$,
hence the Albanese Variety has dimension $n = dim(X)$ and the Albanese map $ \al : X \ra Alb(X)$ is 
a finite unramified covering.

More generally,   \cite{amoros}, Prop. 8.1,   states that   a compact complex manifold $X$ with $\Theta_X \cong \hol_X^n$
is a complex torus if the Lie algebra generated by the holomorphic vector fields is commutative.

In particular, all the Chern classes of $T$, $c_i(T)  \in H^{2i}(T , \ZZ)$ are trivial.

 Recall now the
 \begin{rem}\label{isogeny}
 {\bf (Isogeny principle):} If we have a finite unramified map $ f : Z \ra X$, then $c_{i, \QQ}(Z) = 0 \in H^{2i}(Z, \QQ)$ if and only 
 if $c_{i, \QQ}(X) = 0 \in H^{2i}(X, \QQ)$. 
 
 Defining {\bf isogeny} between manifolds as the equivalence relation generated by the existence
 of such finite unramified maps (which we can further assume to be Galois coverings), we see that the  set of manifolds with a vanishing rational Chern class,
   $$\{ X | c_{i, \QQ}(X)=0 \}$$  consists of a union of  isogeny classes.
 
 \end{rem} 

If we take a hyperelliptic manifold $X = T / G$, then, by the isogeny principle
 the rational Chern classes are trivial
(equivalenty, $ c_i(X) = 0 \in H^{2i}(X , \RR)$).

 The new differential theoretic methods turned out in the 50's to be quite powerful for complex algebraic geometry,  for instance Apte \cite{apte}
 proved (see also \cite{kobayashi}, page 116) that a compact K\"ahler-Einstein manifold $X$
 such that $c_{1,\RR}(X)= 0,  c_{2,\RR} (X) = 0 \in H^*(X, \RR)$ is flat, that is, $X$ is a  hyperelliptic manifold. 
 
 In 1978 Yau \cite{yau} (he obtained the Fields medal for this result) showed that a compact K\"ahler manifold with 
 $ c_{1,\RR}(X) = 0 \in H^2(X, \RR)$ admits a K\"ahler-Einstein metric, that is a metric such that its Ricci form
 is identically zero. Hence the following theorem was proven:
 
 \begin{theorem}\label{zerochern} (Yau)
  A compact K\"ahler manifold $X$
 such that $c_{1,\RR}(X)= 0,  c_{2,\RR} = 0 $ in $ H^*(X, \RR)$, is a  hyperelliptic manifold. 
  \end{theorem}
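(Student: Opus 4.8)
The plan is to assemble three classical ingredients in the following order: Yau's existence theorem, the Chern--Weil/Bochner computation of Apte, and Bieberbach's structure theory for flat manifolds. First, since $c_{1,\RR}(X)=0\in H^2(X,\RR)$, Yau's theorem \cite{yau} furnishes a K\"ahler metric $g$ on $X$ whose Ricci form vanishes identically; equivalently, $(X,g)$ is K\"ahler--Einstein with zero Einstein constant. This is the step that makes the hypothesis of Apte's theorem available: the latter assumes a K\"ahler--Einstein metric, which before $1978$ one did not know to exist.

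Next I would fix this Ricci-flat metric and invoke the Chern--Weil identity used by Apte \cite{apte} (see also \cite{kobayashi}): for a Ricci-flat K\"ahler metric the closed form of top degree representing $c_2(X)\wedge\om^{n-2}$ (where $\om$ is the K\"ahler form of $g$ and $n=\dim_\CC X$) equals a strictly positive constant multiple of $\|R\|^2\,dV$, $R$ being the full curvature tensor of $g$. Hence
$$ \int_X c_2(X)\wedge\om^{n-2} \;=\; c_n\int_X \|R\|^2\,dV, \qquad c_n>0. $$
By hypothesis $c_{2,\RR}(X)=0$, so the left-hand side vanishes, forcing $R\equiv 0$; thus $(X,g)$ is flat. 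This is precisely the content of Apte's result recalled above, which we are entitled to use directly.

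It remains to see that a compact flat K\"ahler manifold is hyperelliptic. The universal cover $\widetilde X$ is complete, simply connected and flat, hence isometric to Euclidean $\RR^{2n}$; moreover the complex structure $J$ is parallel (the K\"ahler condition gives $\nabla J=0$) and the Levi--Civita connection is flat, so $J$ is constant and $\widetilde X$ is biholomorphically and isometrically $\CC^n$ with its standard structure. Therefore $\Ga:=\pi_1(X)$ acts as a discrete cocompact group of holomorphic Euclidean isometries, i.e.\ by maps $z\mapsto Az+b$ with $A$ unitary. Since $X$ is a manifold, $\Ga$ is torsion-free; by Bieberbach's theorems \cite{bieb1, bieb2} the subgroup $\Lam\subset\Ga$ of translations is normal, of finite index, and a lattice of full rank $2n$. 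Setting $T:=\CC^n/\Lam$ and $G:=\Ga/\Lam$, the quotient $T$ is a complex torus, $G$ is finite, and (using torsion-freeness of $\Ga$ together with unitarity of the linear parts) $G$ acts freely on $T$; by maximality of $\Lam$ it contains no translations. Hence $X=T/G$ is a hyperelliptic manifold in the sense of the definition above.

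The only genuinely non-formal step is the second one: converting the cohomological vanishing $c_{2,\RR}(X)=0$ into \emph{pointwise} flatness of a Ricci-flat metric through the Chern--Weil positivity identity. This rests on Yau's theorem (to produce the Ricci-flat metric) and on Apte's curvature computation (to extract flatness from it), both of which I would cite rather than reprove; the passage from flatness to the hyperelliptic description, and the constancy of $J$ on the flat universal cover, are standard.
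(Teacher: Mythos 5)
Your proposal is correct and follows exactly the route the paper itself indicates: Yau's theorem to produce a Ricci-flat K\"ahler metric from $c_{1,\RR}(X)=0$, Apte's Chern--Weil argument to convert $c_{2,\RR}(X)=0$ into pointwise flatness, and then the Bieberbach-type structure theory for compact flat K\"ahler manifolds to exhibit $X$ as $T/G$ with $G$ finite, free of translations, acting freely. The paper only sketches this assembly with references to \cite{yau}, \cite{apte} and \cite{kobayashi}, so your write-up is simply a more detailed version of the same argument.
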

  
See \cite{lu-bt} for the latest touch concerning generalizations of the theorem to the case where $X$ is singular.

  \bigskip

{\bf Acknowledgements:} Thanks to   Francesco Baldassarri for bringing the paper \cite{baldassarri} by Mario Baldassarri to our attention.

Section 2 on the Picard groups of Hyperelliptic Manifolds benefited greatly  from conversations with  my former student Andreas Demleitner, to whom  I owe part  (1') of Theorem \ref{summarize}.

Thanks to a referee  (for our previous JAMS submission) for spotting an inaccuracy in  a previous version of the article.


\begin{thebibliography}{COT13}
 
   \bibitem[AMN12]{amoros}
   \textsc{Jaume Amor\'os,  M\`onica Manjar\'in,   Marcel Nicolau}:
Deformations of K\"ahler manifolds with nonvanishing holomorphic vector fields. 
J. Eur. Math. Soc. (JEMS) 14, No. 4, 997--1040 (2012).
 
 \bibitem[Ap55]{apte}
 \textsc{ Madhumalati Apte}:
Sur certaines classes caract\'eristiques des vari\'et\'es k\"ahl\'eriennes compactes. 
C. R. Acad. Sci., Paris 240, 149--151 (1955).

 \bibitem[At98]{atiyah}
 \textsc{ Francis Atiyah}:
Obituary: John Arthur Todd. 
Bull. Lond. Math. Soc. 30, No. 3, 305--316 (1998).
 
 
 \bibitem[BdF08]{bdf} 
 \textsc{Giuseppe Bagnera, Michele de Franchis}: 
Le superficie algebriche le quali ammettono una rappresentazione parametrica mediante funzioni iperellittiche di due argomenti, 
Mem. di Mat. e di Fis. Soc. It. Sc. (3) 15, 253--343 (1908).

 \bibitem[Bald56]{baldassarri} \textsc{Mario Baldassarri}:
Una caratterizzazione delle variet\`a abeliane e pseudo-abeliane. 
Ann. Mat. Pura Appl. (4) 42 (1956), 227--252.



\bibitem[BCF15]{bcf}\textsc{ Ingrid Bauer, Fabrizio Catanese, Davide Frapporti}:  Generalized Burniat type surfaces and Bagnera-De Franchis varieties. J. Math. Sci. Univ. Tokyo 22 (2015), no. 1, 55--111.

\bibitem[Bieb11]{bieb1}
\textsc{ Ludwig Bieberbach}:
\"Uber die Bewegungsgruppen der euklidischen R\"aume. (Erste Abhandlung.) 
Math. Ann. 70, 297--336 (1911).

\bibitem[Bieb12]{bieb2}
\textsc{ Ludwig Bieberbach}:
\"Uber die Bewegungsgruppen der euklidischen R\"aume. (Zweite Abhandlung.) Die Gruppen mit einem endlichen Fundamentalbereich. 
Math. Ann. 72, 400--412 (1912).

\bibitem[CamZha05]{cz} \textsc{ Frederic Campana, Qi Zhang}:
Compact K\"ahler threefolds of $\pi_1$-general type. Recent progress in arithmetic and algebraic geometry, 1--12, 
Contemp. Math., 386, Amer. Math. Soc., Providence, RI, (2005). 


\bibitem[Cat08]{cime} \textsc{Fabrizio  Catanese}: Differentiable and deformation type of algebraic surfaces, real and symplectic structures. CIME Course:
Symplectic 4-manifolds and algebraic surfaces, Springer Lecture Notes in Math. 1938 (2008) 55--167. 

 	\bibitem[Cat15]{topmethods}  \textsc{Fabrizio  Catanese}:
	Topological methods in moduli theory,  Bull. Math. Sci. 5 (2015), no. 3, 287--449. 
	

	
	\bibitem[CC17]{Catanese-Corvaja} \textsc{Fabrizio  Catanese, Pietro Corvaja}: Teichm\"uller spaces of generalized hyperelliptic manifolds. Complex and symplectic geometry, 39-49, Springer INdAM Ser., 21, Springer, Cham (2017).
	
	
	
\bibitem[CD20a]{ccd} \textsc{Fabrizio  Catanese, Andreas Demleitner}: Rigid Group Actions on Complex Tori are Projective (after Ekedahl), Commun. Contemp. Math. 22, No. 7, Article ID 1950092, 15 p. (2020), with an appendix by Ben\^oit Claudon, Fabrizio  Catanese, Andreas Demleitner. 


	
	\bibitem[CD20b]{cd} \textsc{Fabrizio  Catanese, Andreas Demleitner}: The classification of Hyperelliptic threefolds, 
	Groups Geom. Dyn. 14, No. 4, 1447-- 1454 (2020).
	
	
	\bibitem[CatLiu21]{catliu}
\textsc{Fabrizio  Catanese,  Wenfei Liu}:
	On topologically trivial automorphisms of compact K\"ahler manifolds and 		algebraic surfaces. 
	Atti Accad. Naz. Lincei, Cl. Sci. Fis. Mat. Nat., IX. Ser., Rend. Lincei, Mat. Appl. 	32, No. 2, 181--211 (2021).	



	\bibitem[CHK13]{chk} \textsc{ Beno\^it Claudon, Andreas H\"oring, J\'anos 	Koll\'ar}: 
	Algebraic varieties with quasi-projective universal cover. J. Reine Angew. Math. 	679, 207--221 (2013).

 	
 	\bibitem[DHS08]{Dekimpe} \textsc{Karel Dekimpe, Marek Ha\l enda, Andrzej 	Szczepa\'nski}: K\"ahler flat manifolds. J. Math. Soc. Japan 61 (2009), no. 2, 	363--377.
	
	\bibitem[Dem16]{dem}\textsc{ Andreas Demleitner}: 
	Classification of Bagnera-de Franchis Varieties in Small Dimensions,  Ann. Fac. 	Sci. Toulouse, Math. (6) 29, No. 1, 111--133 (2020).
	
	

	
		
	\bibitem[EnrSev09]{es}\textsc{Federigo Enriques, Francesco Severi}:  
	M\'emoire sur les surfaces hyperelliptiques. Acta Math. 32 (1909), no. 1, 		283--392, and  Acta Math. 33 (1910), no. 1, 321--403.
 	


	\bibitem[Ful84]{fulton}
	\textsc{ William Fulton}: Intersection theory. 
	Ergebnisse der Mathematik und ihrer Grenzgebiete, 3. Folge, Bd. 2. Berlin etc.: 	Springer-Verlag. xi, 470 p. (1984).

	\bibitem[Groth58]{grothendieck}
	\textsc{ Alexander Grothendieck}:
	La th\'eorie des classes de Chern. 
	Bull. Soc. Math. Fr. 86, 137--154 (1958).
	

	\bibitem[Kob87]{kobayashi}
 	\textsc{	 Shoshichi Kobayashi}:
	Differential geometry of complex vector bundles. 
	Publications of the Mathematical Society of Japan, 15; Kano Memorial Lectures, 	5. Princeton, NJ: Princeton University Press; Tokyo: Iwanami Shoten 		Publishers. xi, 304 p.  (1987).


	
	\bibitem[Kod66]{kod} \textsc{Kunihiko Kodaira}:
On the structure of compact complex analytic surfaces. II. 
Amer. J. Math. 88 (1966) 682--721. 

	\bibitem[Kod68]{kodaira} \textsc{Kunihiko Kodaira}:
On the structure of compact complex analytic surfaces. III. 
Amer. J. Math. 88 (1968) 55--83. 


	\bibitem[Lang]{lang}\textsc{ Serge Lang}: Algebra, Revised third Edition, Springer G.T.M. (2005).
	
	
	\bibitem[La01]{Lange} \textsc{Herbert  Lange}: Hyperelliptic varieties. Tohoku Math. J. (2) 53 (2001), no. 4, 491--510.
	

	\bibitem[Lu-Ta18]{lu-bt}
	\textsc{ Steven Lu,   Behrouz Taji}:
A characterization of finite quotients of abelian varieties. 
Int. Math. Res. Not. 2018, No. 1, 292--319 (2018).
	



\bibitem[Mum70]{mumford}\textsc{David Mumford}:
Abelian varieties. 
Studies in Mathematics. Tata Institute of Fundamental Research 5. London: Oxford University Press. VIII, 242 p. (1970),
with notes by C. P. Ramanujam.

\bibitem[Nak55]{nakano} \textsc{ Shigeo Nakano}:
Tangential vector bundle and Todd canonical systems of an algebraic variety. 
Mem. Coll. Sci. Univ. Kyoto, Ser. A 29, 145--149 (1955).









\bibitem[Sev51]{severi}:
 \textsc{ Francesco Severi} 
Fondamenti per la geometria sulle variet\'a algebriche II. 
Ann. Mat. Pura Appl., IV. Ser. 32, 1--81 (1951).


	
	\bibitem[UY76]{U-Y} \textsc{Koji Uchida, Hisao Yoshihara}: Discontinuous groups of affine transformations of $\CC^3$. Tohoku Math. J. (2) 28 (1976), no. 1, 89-94.
	
	\bibitem[Yau78]{yau}
	\textsc{ Shing-Tung Yau}:
On the Ricci curvature of a compact K\"ahler manifold and the complex Monge-Amp\`ere equation. I. 
Commun. Pure Appl. Math. 31, 339--411 (1978).

\end{thebibliography}
\end{document}